\DeclarePairedDelimiterX\setc[2]{\{}{\}}{\,#1 \;\delimsize\vert\; #2\,}
\newcommand{\R}{\mathbb{R}}
\newcommand{\B}{\mathbb{B}}
\renewcommand{\S}{\mathbb{S}}
\newcommand{\Sn}{\mathbb{S}^n}
\renewcommand{\O}{\Omega}
\renewcommand{\AA}{|\Omega|}
\newtheorem{defi}{Definition}[section] 
\newtheorem{thm}[defi]{Theorem}
\newtheorem{rem}[defi]{Remark}
 \newtheorem{lemma}[defi]{Lemma}
\newtheorem{cor}[defi]{Corollary}
\title[Lower bounds for the sum of the reciprocals of eigenvalues]{Lower bounds for the sum of the reciprocals of eigenvalues of bounded domains in \( \mathbb{R}^n \), spheres, and closed orientable surfaces
}
\author{Mehdi Eddaoudi}
\begin{document}
\begin{abstract}
We establish lower bounds for the sum of the reciprocals of eigenvalues of the Laplacian. For bounded domains, our result extends the upper bound provided by Bucur and Henrot on the second Neumann eigenvalue and is related to a result by Wang and Xia, which connects to a conjecture of Ashbaugh and Benguria. For spheres and surfaces, we extend known results on the first and second eigenvalues, and strengthen an analogous conjecture involving the conformal volume of Li and Yau.
\end{abstract}
\maketitle


\section{Introduction and Main Results}

\subsection{Neumann eigenvalues of bounded domain in $\R^n$.}
Let \( \Omega \) be a regular, bounded domain in \( \mathbb{R}^n \), and consider the classical Neumann eigenvalue problem:
\[
\begin{cases}
\Delta f = -\mu f & \text{in } \Omega, \\
\frac{\partial f}{\partial \nu} = 0 & \text{on } \partial \Omega,
\end{cases}
\]
where \( \frac{\partial}{\partial \nu} \) represents the outward unit normal derivative on the boundary \( \partial \Omega \). It is well known that the spectrum of this problem is real, discrete, and consists of an infinite sequence of eigenvalues given by
\[
0 = \mu_0(\Omega) < \mu_1(\Omega) \leq \mu_2(\Omega) \leq \cdots \to +\infty.
\]
For each eigenvalue \( \mu_j(\Omega) \), let \( \{f_j\}_{j \geq 0} \) denote an orthonormal basis of \( L^2(\Omega) \) formed by the corresponding eigenfunctions. The eigenvalues are characterized variationally as follows:
\begin{equation}\label{caractérisation variationnelle Domaine}
\mu_k(\Omega) = \min_{f \in A_k \setminus \{0\}} \frac{\int_\Omega |\nabla f|^2 \, dx}{\int_\Omega f^2 \, dx},
\end{equation}
where \( A_k \) is the subspace of \( H^1(\Omega) \) defined by the orthogonality condition:
\[
A_k = \left\{ f \in H^1(\Omega) \mid \int_\Omega f f_j \, dx = 0 \text{ for } j = 0, 1, \dots, k-1 \right\}.
\]

In the two-dimensional setting, Szeg\H{o}~\cite{Szego} established that if \( \Omega \subset \mathbb{R}^2 \) is a simply connected bounded planar domain, then the first non-trivial eigenvalue satisfies the following sharp inequality
\begin{equation}
\mu_1(\Omega) \AA \leq \mu_1(\mathbb{D}) |\mathbb{D}| \sim 3.39 \pi,
\end{equation}
where \( \AA \) denotes the area of \( \Omega \), and \( \mathbb{D} \) is the unit disk. Later, Weinberger~\cite{Weinberger} extended this result to higher dimensions, proving that for any bounded domain \( \Omega \subset \mathbb{R}^n \)
\begin{equation}
\mu_1(\Omega) |\Omega|^{2/n} \leq \mu_1(\mathbb{B}^n) |\mathbb{B}^n|^{2/n},
\end{equation}
where \( \mathbb{B}^n \) denotes the unit ball in \( \mathbb{R}^n \). Szeg\H{o} and Weinberger also observed that their proof in the planar case could be extended to derive the following bound for the reciprocals of the first two eigenvalues
\begin{equation}\label{eq:AshbaughBenguria}
\frac{1}{\mu_1(\Omega)} + \frac{1}{\mu_2(\Omega)} \geq \frac{2 \AA}{\pi \mu_1(\mathbb{D})}.
\end{equation}
These bounds are sharp, with equality attained if and only if \( \Omega \) is a disk (or an \( n \)-dimensional ball in higher dimensions).

The upper bound on the second Neumann eigenvalue was established by Girouard, Nadirashvili, and Polterovich~\cite{GNP}, who introduced a method involving folding measures into hyperbolic caps within the Poincaré disk model. They proved that for any simply connected bounded domain \( \Omega \subset \mathbb{R}^2 \), the second eigenvalue satisfies
\begin{equation}\label{eq:GNP}
\mu_2(\Omega) |\Omega| < \mu_2(\mathbb{D} \sqcup \mathbb{D}) |\mathbb{D} \sqcup \mathbb{D}|,
\end{equation}
where \( \mathbb{D} \sqcup \mathbb{D} \) denotes the disjoint union of two identical disks.

Building upon this folding approach in conjunction with Weinberger’s argument, Bucur and Henrot~\cite{BucurHenrot} extended the result to higher dimensions. For any bounded domain \( \Omega \subset \mathbb{R}^n \), then
\begin{equation}\label{eq:BucurHenrot}
\mu_2(\Omega) |\Omega|^{2/n} < \mu_2(\mathbb{B}^n \sqcup \mathbb{B}^n) |\mathbb{B}^n \sqcup \mathbb{B}^n|^{2/n}.
\end{equation}

Equality in both inequalities \eqref{eq:GNP} and \eqref{eq:BucurHenrot} is attained in the limit where the domain splits into two identical disjoint disks (or two disjoint \( n \)-dimensional balls in the higher-dimensional case).

For higher-dimensional bounded domains in \( \mathbb{R}^n \), the analogue of inequality \eqref{eq:AshbaughBenguria} is a conjecture by Ashbaugh and Benguria~\cite{AshBenConjec}. It states that
\begin{equation}\label{conjABDirichlet}
\frac{1}{\mu_1(\Omega)} + \cdots + \frac{1}{\mu_n(\Omega)} \geq \frac{n|\Omega|^{2/n}}{\mu_1(\mathbb{B}^n) |\mathbb{B}^n|^{2/n}},
\end{equation}
with equality holding if and only if \( \Omega \) is a ball.

Using symmetrization arguments from Chiti’s work~\cite{chiti1983bound}, Ashbaugh and Benguria~\cite{AshBenConjec} proved this conjecture up to a dimensional constant, namely that
\begin{equation}\label{Ineq:AshBeng}
\frac{1}{\mu_1(\Omega)} + \cdots + \frac{1}{\mu_n(\Omega)} \geq \frac{1}{n+2} \frac{n|\Omega|^{2/n}}{\mu_1(\mathbb{B}^n) |\mathbb{B}^n|^{2/n}}.
\end{equation}

The best known result supporting this conjecture to date is due to Wang and Xia \cite{Xia2018OnAC},
\begin{equation}\label{eq:Wang Xia Neumann}
\frac{1}{\mu_1(\Omega)} + \cdots + \frac{1}{\mu_{n-1}(\Omega)} \geq \frac{(n-1)|\Omega|^{2/n}}{\mu_1(\mathbb{B}^n) |\mathbb{B}^n|^{2/n}},
\end{equation}
with equality holding if and only if \( \Omega \) is a ball in \( \mathbb{R}^n \).

Similar results and conjectures also exist in the context of domains in a sphere or in a curved space \cite{bucur2022sharpsphere, benguria2020sharp,chen2024upper,langford2023maximizers}.

\medskip

Starting from the sum in inequality \eqref{eq:Wang Xia Neumann} from \( 1 / \mu_2 \), we establish the following sharp lower bound.

\begin{thm}\label{thmprncpalDomaine}
Let \( \Omega \subset \mathbb{R}^n \) be a regular bounded domain in \( \mathbb{R}^n \). Then 
\[
\frac{1}{\mu_2(\Omega)} + \cdots + \frac{1}{\mu_{n}(\Omega)} > \frac{(n-1)|\Omega|^{2/n}}{\mu_2(\mathbb{B}^n \sqcup \mathbb{B}^n) |\mathbb{B}^n \sqcup \mathbb{B}^n|^{2/n}},
\]
with equality achieved by a sequence of 
domains splitting into two balls of identical volume.
\end{thm}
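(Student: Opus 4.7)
The strategy combines the folding construction of Bucur--Henrot (which manufactures test functions orthogonal to both $f_0 \equiv 1$ and the first nontrivial eigenfunction $f_1$) with a min-max plus trace argument in the spirit of Wang--Xia \cite{Xia2018OnAC}, closed off by the harmonic--arithmetic mean inequality. My aim is to produce $n$ test functions $\varphi_1, \dots, \varphi_n \in H^1(\Omega)$ that are $L^2(\Omega)$-orthogonal to $f_0$ and $f_1$, are pairwise $L^2(\Omega)$-orthogonal, and share a common $L^2(\Omega)$-norm $c$. With these in hand, the remainder is essentially algebraic.

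\medskip
\emph{Construction of the test functions.} Following \cite{BucurHenrot}, transplant $\Omega$ into $\mathbb{S}^n$ via stereographic projection and a Möbius normalization, then fold across an equator $\Pi$ chosen compatibly with the nodal decomposition of $f_1$. The $n$ ambient coordinate functions $x_1, \dots, x_n$ parallel to $\Pi$, pulled back to $\Omega$, yield test functions for which the folding forces $L^2(\Omega)$-orthogonality to $f_1$, a Hersch--Brouwer topological lemma enforces orthogonality to constants, and additional degrees of freedom in the conformal group of $\mathbb{S}^n$ (including rotations in the $\{x_1, \dots, x_n\}$-plane) arrange the $\varphi_i$'s to be pairwise orthogonal with equal $L^2$-norms, since this imposes only finitely many linear and quadratic constraints against a high-dimensional symmetry group. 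Weinberger-style rearrangement applied on each folded half, each of volume $|\Omega|/2$, then yields the summed gradient bound
\[
\sum_{i=1}^n \int_\Omega |\nabla \varphi_i|^2 \;\leq\; \mu_1(B^*_{1/2}) \sum_{i=1}^n \int_\Omega \varphi_i^2,
\]
where $B^*_{1/2}$ denotes the Euclidean ball of volume $|\Omega|/2$.

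\medskip
\emph{Min-max and trace inequality.} Set $A = cI$ and $B_{ij} = \int_\Omega \nabla \varphi_i \cdot \nabla \varphi_j$. Since $\mathrm{span}(\varphi_1, \dots, \varphi_n)$ is an $n$-dimensional subspace of $\{f_0, f_1\}^\perp$, Courant--Fischer applied to the restriction of $-\Delta$ to $\{f_0, f_1\}^\perp$ (whose eigenvalues are $\mu_2 \leq \mu_3 \leq \cdots$) gives $\mu_{k+1}(\Omega) \leq \nu_k$ for $k = 1, \dots, n$, where $\nu_1 \leq \cdots \leq \nu_n$ are the generalized eigenvalues of $Bv = \nu A v$. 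Therefore
\[
\sum_{k=2}^{n} \frac{1}{\mu_k(\Omega)} \;\geq\; \sum_{k=1}^{n-1} \frac{1}{\nu_k},
\]
while the Weinberger estimate rewrites as $\sum_{k=1}^n \nu_k = \mathrm{tr}(B)/c \leq n \, \mu_1(B^*_{1/2})$.

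\medskip
\emph{Conclusion via HM--AM.} If $\nu_n \geq \mu_1(B^*_{1/2})$, then $\sum_{k=1}^{n-1} \nu_k \leq (n-1)\mu_1(B^*_{1/2})$, and the harmonic--arithmetic mean inequality delivers $\sum_{k=1}^{n-1} 1/\nu_k \geq (n-1)^2/\sum_{k=1}^{n-1}\nu_k \geq (n-1)/\mu_1(B^*_{1/2})$. Otherwise every $\nu_k < \mu_1(B^*_{1/2})$, and the inequality $\sum_{k=1}^{n-1} 1/\nu_k > (n-1)/\mu_1(B^*_{1/2})$ is immediate. The scaling identity $\mu_1(B^*_{1/2}) = \mu_2(\mathbb{B}^n \sqcup \mathbb{B}^n)\,|\mathbb{B}^n \sqcup \mathbb{B}^n|^{2/n}/|\Omega|^{2/n}$ converts this into the stated inequality. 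Strict inequality persists for any actual domain: equality in HM--AM would force $\mu_2 = \cdots = \mu_n$, and equality in the Weinberger rearrangement on both folded halves simultaneously is only attained in the limit $\Omega \to \mathbb{B}^n \sqcup \mathbb{B}^n$. The hardest step I anticipate is the first one --- simultaneously enforcing the three sets of conditions on the $\varphi_i$ demands a careful degree-theoretic extension of Hersch's Möbius trick inside the conformal group of $\mathbb{S}^n$, dovetailed with the Bucur--Henrot folding machinery --- and this is where the bulk of the technical work lies.
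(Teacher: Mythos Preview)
Your proposal has a genuine gap at its load-bearing step: the assertion that the $\varphi_i$ can be arranged to be pairwise $L^2$-orthogonal \emph{and} to share a common $L^2$-norm (i.e.\ $A=cI$). The Bucur--Henrot construction does not take place on $\mathbb{S}^n$ via stereographic projection; it is carried out directly in $\mathbb{R}^n$, and the test map is $g_{AB}:\Omega\to\mathbb{R}^n$ built from Weinberger's Bessel profile $G_{r_\Omega}$. The only freedom left after the fixed-point/folding step determines $A,B$ is the choice of orthonormal basis $(e_i)$, i.e.\ an action of $O(n)$. Writing $M=\int_\Omega g_{AB}\,g_{AB}^{\!T}\,dx$, the Gram matrix in a basis $(e_i)$ is $A_{ij}=e_i^T M e_j$; getting $A=cI$ forces $M=cI$, which is a property of $\Omega$ you cannot enforce by rotation. (Rotations can diagonalize $M$, or---by Schur--Horn---equalize its diagonal, but not both unless $M$ is already scalar.) And once $A\neq cI$, the identity $\sum_k\nu_k=\mathrm{tr}(B)/c$ collapses: the Bucur--Henrot mass-displacement bound controls only $\mathrm{tr}(B)/\mathrm{tr}(A)$, not $\mathrm{tr}(A^{-1}B)=\sum_k\nu_k$, so your HM--AM case split has nothing to bite on. The ``additional degrees of freedom in the conformal group of $\mathbb{S}^n$'' you invoke are simply not present in this Euclidean construction---and even if they were, dimension counting is not an existence proof.

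The paper sidesteps this obstruction entirely. Rather than trying to normalize the Gram matrix, it keeps the raw components $\phi_i=g_{AB}\cdot e_i$ and uses a recursive Borsuk--Ulam argument to choose $(e_i)$ so that $\phi_i\perp f_0,f_1,\ldots,f_i$; then each $\phi_i$ is directly admissible for $\mu_{i+1}$, yielding $\mu_{i+1}\int\phi_i^2\le\int|\nabla\phi_i|^2$. Summing over $i$ and expanding $|\nabla\phi_i|^2$ produces terms weighted by $x_i^2/|x|^2$; the crucial analytic input is the pointwise Bessel inequality $G'(t)^2\le G(t)^2/t^2$ (Wang--Xia's lemma), whose sign lets one discard the $(1/\mu_{i+1}-1/\mu_{n+1})$-weighted cross terms and isolate exactly the Bucur--Henrot Rayleigh ratio with a clean prefactor $\tfrac{1}{n-1}\sum_{i=2}^{n}1/\mu_i$. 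The mass-displacement estimate then closes the argument. This Bessel sign condition---specific to the Weinberger profile and absent from your outline---is the step that replaces your unattainable normalization $A=cI$.
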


Theorem~\ref{thmprncpalDomaine} provides a natural strengthening of Bucur and Henrot's inequality~\eqref{eq:BucurHenrot}. It is worth mentioning that Bucur, Martinet, and Nahon~\cite{bucur2022sharpsphere} established the same inequality when \( \Omega \subset \mathbb{S}^n \) is an open Lipschitz set. Moreover, in Remark~10 of the same work, they already observed that their result should hold in the Euclidean setting as well, although without providing a proof.  

An immediate consequence of Theorem \ref{thmprncpalDomaine} is an improvement of Ashbaugh and Benguria's inequality~\eqref{Ineq:AshBeng}.

\begin{cor}
Let \( \Omega \subset \mathbb{R}^n \) be a regular bounded domain in \( \mathbb{R}^n \). Then 
\[
\frac{1}{\mu_1(\Omega)} + \cdots + \frac{1}{\mu_{n}(\Omega)} > \frac{\left( (n-1)/2^{2/n} + 1 \right)|\Omega|^{2/n}}{\mu_1(\mathbb{B}^n) |\mathbb{B}^n|^{2/n}}.
\]
\end{cor}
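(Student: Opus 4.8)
The plan is to combine Theorem~\ref{thmprncpalDomaine} with the classical Szeg\H{o}--Weinberger bound, after rewriting the right-hand side of Theorem~\ref{thmprncpalDomaine} in terms of the ball alone. The first step is to identify the relevant spectral and geometric quantities of \( \mathbb{B}^n \sqcup \mathbb{B}^n \). Since the Neumann spectrum of a disjoint union is the union, with multiplicities, of the Neumann spectra of the pieces, the eigenvalues of \( \mathbb{B}^n \sqcup \mathbb{B}^n \) are those of \( \mathbb{B}^n \), each with doubled multiplicity; in particular \( \mu_0 = \mu_1 = 0 \) and \( \mu_2(\mathbb{B}^n \sqcup \mathbb{B}^n) = \mu_1(\mathbb{B}^n) \). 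Together with \( |\mathbb{B}^n \sqcup \mathbb{B}^n| = 2|\mathbb{B}^n| \), hence \( |\mathbb{B}^n \sqcup \mathbb{B}^n|^{2/n} = 2^{2/n}|\mathbb{B}^n|^{2/n} \), this yields
\[
\mu_2(\mathbb{B}^n \sqcup \mathbb{B}^n)\,|\mathbb{B}^n \sqcup \mathbb{B}^n|^{2/n} = 2^{2/n}\,\mu_1(\mathbb{B}^n)\,|\mathbb{B}^n|^{2/n},
\]
so the right-hand side of Theorem~\ref{thmprncpalDomaine} equals \( \dfrac{(n-1)/2^{2/n}\cdot |\Omega|^{2/n}}{\mu_1(\mathbb{B}^n)\,|\mathbb{B}^n|^{2/n}} \).

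Next I would invoke Weinberger's inequality \( \mu_1(\Omega)\,|\Omega|^{2/n} \le \mu_1(\mathbb{B}^n)\,|\mathbb{B}^n|^{2/n} \), which upon taking reciprocals gives
\[
\frac{1}{\mu_1(\Omega)} \ge \frac{|\Omega|^{2/n}}{\mu_1(\mathbb{B}^n)\,|\mathbb{B}^n|^{2/n}}.
\]
Adding this to the (strict) inequality of Theorem~\ref{thmprncpalDomaine}, after the rewriting above, produces
\[
\frac{1}{\mu_1(\Omega)} + \cdots + \frac{1}{\mu_n(\Omega)} > \frac{\bigl((n-1)/2^{2/n} + 1\bigr)|\Omega|^{2/n}}{\mu_1(\mathbb{B}^n)\,|\mathbb{B}^n|^{2/n}},
\]
which is the claimed bound; the strictness is inherited directly from Theorem~\ref{thmprncpalDomaine}, so no case analysis of the equality situation is needed.

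There is no serious obstacle here: the argument is a two-line combination once the identity \( \mu_2(\mathbb{B}^n \sqcup \mathbb{B}^n) = \mu_1(\mathbb{B}^n) \) and the volume scaling are in place. The only point deserving a word of care is that these two facts — the structure of the spectrum of a disjoint union and the homogeneity of \( |\Omega|^{2/n} \) under scaling — are precisely what converts the ``two-balls'' normalization appearing in Theorem~\ref{thmprncpalDomaine} into the ``one-ball'' normalization of the Ashbaugh--Benguria conjecture \eqref{Ineq:AshBeng}, exhibiting the stated constant \( (n-1)/2^{2/n} + 1 \) as the genuine improvement over \( n/(n+2) \).
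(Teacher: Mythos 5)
Your proposal is correct and is precisely the ``immediate consequence'' the paper alludes to without writing out: the identity \( \mu_2(\mathbb{B}^n \sqcup \mathbb{B}^n)\,|\mathbb{B}^n \sqcup \mathbb{B}^n|^{2/n} = 2^{2/n}\mu_1(\mathbb{B}^n)\,|\mathbb{B}^n|^{2/n} \) converts Theorem~\ref{thmprncpalDomaine} to the one-ball normalization, and adding Weinberger's bound \( 1/\mu_1(\Omega) \ge |\Omega|^{2/n}/(\mu_1(\mathbb{B}^n)|\mathbb{B}^n|^{2/n}) \) gives the stated constant, with strictness inherited from Theorem~\ref{thmprncpalDomaine}.
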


\subsection{Eigenvalue estimates on closed manifolds}

Let \( (M,g) \) be a closed Riemannian manifold of dimension \( n \), and let 
\[
\lambda_0(M,g) = 0 < \lambda_1(M,g) \leq \cdots \to +\infty
\]
be the spectrum of the Laplace-Beltrami operator \( \Delta_g \).

\smallskip

Inspired by Szeg\H{o}'s work \cite{Szego}, Hersch \cite{Hersch} used conformal automorphisms of the $2$-sphere $\S^2$ to prove that for any metric $g$ on the sphere $\S^2$
\begin{equation}\label{Herschsomme}
     \frac{1}{\lambda_1(\mathbb{S}^2, g)} + \frac{1}{\lambda_2(\mathbb{S}^2, g)} + \frac{1}{\lambda_3(\mathbb{S}^2, g)} \geq \frac{3 \text{Area}(g)}{8 \pi},
\end{equation}
where $g_0$ is the standard round metric on the sphere. Hersch's inequality \eqref{Herschsomme} was later extended by Yang and Yau \cite{YangYau} to closed orientable surfaces \((M, g)\) of genus \(\gamma\), and further refined by El Soufi and Ilias \cite{ElSoufiIlias1984}, leading to 
\begin{equation}\label{YangYausomme}
     \frac{1}{\lambda_1(M, g)} + \frac{1}{\lambda_2(M, g)} + \frac{1}{\lambda_3(M, g)} \geq \frac{3 \text{Area}(g)}{8 \pi \left\lfloor \frac{\gamma+3}{2} \right\rfloor},
\end{equation}
where \(\left\lfloor \cdot \right\rfloor\) denotes the floor function.

Inequalities \(\eqref{Herschsomme}\) and \(\eqref{YangYausomme}\) naturally imply a topological bound for the first eigenvalue of orientable surfaces
\begin{equation}\label{YangYauestimation}
    \lambda_1(M, g) \text{Area}(g) \leq 8 \pi \left\lfloor \frac{\gamma+3}{2} \right\rfloor.
\end{equation}
Moreover, through the recent porgress of Karpukhin, Nadirashvili, Penskoi, and Polterovich~\cite{KNPP2021,KNPP2}, we know that 
\begin{gather}\label{ineq:KNPPSurfaces}
    \lambda_k(M,g)\text{Area}(g)  \leq 8 \pi k \left\lfloor \frac{\gamma+3}{2} \right\rfloor
\end{gather}
holds for an arbitrary \(k \geq 1\).

In contrast, for higher-dimensional closed manifolds, the situation is quite different, as no upper bounds can be established solely in terms of topology. Instead, El Soufi and Ilias~\cite{el1986immersions} expressed an upper bound on the first eigenvalue using a conformal invariant: the conformal volume. Specifically, for each conformal class \(C\) on \(M\), and for any Riemannian metric \(g \in C\) that admits a conformal immersion \(\phi: (M, C) \to \mathbb{S}^m \subset \mathbb{R}^{m+1}\), then
\begin{gather}\label{ineq:ElSoufiIlias}
    \lambda_1(M, g)\text{vol}(M,g)^{2/n} \leq n V_c(m, M, C)^{2/n},
\end{gather}
where \(V_c(m, M, C)\) denotes the $m$-conformal volume.

Equality occurs if and only if there exists a minimal immersion \(\phi: M \to \mathbb{S}^m\) satisfying
\begin{gather}\label{eq:eigenphi}
 -\Delta_g \phi = \lambda_1(M, g) \phi,
\end{gather}
and \(\phi^{*}g_{\S^m} = k g\) for some constant \(k > 0\).

In a similar vein to the Neumann case, the reciprocal sum associated with inequality \eqref{YangYausomme} remains an open question in higher dimensions, and we conjecture that the following is expected to be true
\begin{equation}\label{conjecturevariété}
     \frac{1}{\lambda_1(M,g)} + \cdots + \frac{1}{\lambda_{m+1}(M,g)} \geq \frac{(m+1) \, \text{vol}(M,g)^{2/n}}{ n V_c(m,M,C)^{2/n}}.
\end{equation}
A first step in this direction is a result on the \(n\)-sphere \(\S^n\). Denote by $g_0$ its standard round metric and by $w_n$ its volume. We prove that:
\begin{thm}\label{thmprincSphère1}
    For any metric \(g\) on \(\mathbb{S}^n\) that is conformal to the standard metric \(g_0\), we have
    \begin{equation}
    \frac{1}{\lambda_1(\mathbb{S}^n, g)} + \cdots + \frac{1}{\lambda_{n+1}(\mathbb{S}^n, g)} \geq \frac{(n+1) \, \text{vol}(\S^n,g)^{2/n}}{n K_n w_n^{2/n}},
    \end{equation}
    where \(K_n\) is a dimensional constant defined as
    \begin{equation}\label{defi de Kn}
    K_n = \frac{n+1}{n} \left( \frac{\Gamma(n)\Gamma\left(\frac{n+1}{2}\right)}{\Gamma\left(n+\frac{1}{2}\right)\Gamma\left(\frac{n}{2}\right)} \right)^{2/n},
    \end{equation}
    and where \(\Gamma\) is the Gamma function.
\end{thm}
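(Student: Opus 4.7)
The plan is to combine the Hersch-Li-Yau conformal trick with a Poincaré-type trace inequality for the sum of reciprocals. First, I would invoke Hersch's lemma: since $g$ lies in $[g_0]$, a Brouwer degree argument on the parameter space $\B^{n+1}$ of Möbius transformations of $(\S^n, g_0)$ produces a conformal automorphism $\gamma \colon \S^n \to \S^n$ whose coordinate functions $\gamma_1, \ldots, \gamma_{n+1}$ satisfy the balancing condition $\int_{\S^n} \gamma_i \, dv_g = 0$. After post-composing $\gamma$ with a suitable rotation in $O(n+1)$ (which preserves both the Möbius property and the balancing), the Gram matrix $S_{ij} = \int \gamma_i \gamma_j \, dv_g$ becomes diagonal $\mathrm{diag}(s_1, \ldots, s_{n+1})$, with trace $\sum_i s_i = \text{vol}(\S^n, g)$ coming from the constraint $\sum_i \gamma_i^2 \equiv 1$.

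Next, the functions $u_i = \gamma_i/\sqrt{s_i}$ form an $L^2$-orthonormal family in the orthogonal complement of constants. By Courant-Fischer, each eigenvalue $\mu_j$ of the stiffness matrix $M_{ij} = T_{ij}/\sqrt{s_i s_j}$ dominates $\lambda_j(\S^n, g)$, where $T_{ij} = \int \nabla \gamma_i \cdot \nabla \gamma_j \, dv_g$. Combining this with the AM-HM inequality applied to the positive $\mu_j$ yields
\[
\sum_{k=1}^{n+1} \frac{1}{\lambda_k(\S^n, g)} \;\geq\; \mathrm{tr}(M^{-1}) \;\geq\; \frac{(n+1)^2}{\mathrm{tr}(M)} \;=\; \frac{(n+1)^2}{\sum_i T_{ii}/s_i}.
\]
The sum $\sum_i T_{ii} = \int_{\S^n} |d\gamma|_g^2 \, dv_g$ is the Dirichlet energy of $\gamma$ as a conformal map, and Hölder's inequality with exponents $n/2$ and $n/(n-2)$ applied to the product of the conformal factors of $\gamma$ and $g$ gives the El Soufi-Ilias bound $\sum_i T_{ii} \leq n \, w_n^{2/n} \, \text{vol}(\S^n, g)^{(n-2)/n}$, since Möbius maps preserve the total $g_0$-volume.

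The hard step is to extract the dimensional constant $K_n$. If the masses $s_i$ were all equal to $\text{vol}(g)/(n+1)$, then $\sum_i T_{ii}/s_i$ would collapse to $(n+1)/\text{vol}(g) \cdot \sum_i T_{ii}$, and the chain above would deliver the conjectural bound with $K_n = 1$; this is precisely what happens for $n = 2$ and recovers Hersch's sharp inequality. For $n \geq 3$, however, the Möbius degrees of freedom are already consumed by Hersch's balancing and the diagonalization of $S$, leaving no room to equi-distribute the $s_i$, and a worst-case optimization becomes necessary. I would parameterize admissible configurations by the boost family $\gamma_\xi$, $\xi \in \B^{n+1}$, with explicit conformal factor $F_\xi(x) = (1-|\xi|^2)/|x-\xi|^2$, and optimize $\sum_i T_{ii}/s_i$ subject to both $\sum_i s_i = \text{vol}(g)$ and the Möbius constraint $\sum_i \gamma_i^2 \equiv 1$. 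The Gamma-function ratio $\Gamma(n)\Gamma(\tfrac{n+1}{2})/(\Gamma(n+\tfrac{1}{2})\Gamma(\tfrac{n}{2}))$ appearing in $K_n$ should emerge from integrating explicit powers of $F_\xi$ against the spherical-cap measures arising in the extremal configuration, with the exponent $2/n$ tracing back to the Hölder step.
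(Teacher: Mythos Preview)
The gap is at the AM--HM step. From $\sum_k 1/\lambda_k \geq \mathrm{tr}(M^{-1}) \geq (n+1)^2/\mathrm{tr}(M)$ you would need an \emph{upper} bound on $\mathrm{tr}(M) = \sum_i T_{ii}/s_i$, but none is available: H\"older plus conformal invariance controls $\sum_i T_{ii}$, as you note, yet a single small mass $s_i$ makes $\sum_i T_{ii}/s_i$ blow up, and nothing in the construction prevents this. The proposed ``worst-case optimization over boosts $\gamma_\xi$'' cannot repair it, because Hersch's balancing has already fixed $\xi$ uniquely and there are no M\"obius degrees of freedom left to vary. You also misidentify where $K_n$ comes from: it is not the outcome of an extremal problem over M\"obius configurations, but simply the explicit value of a single conformally invariant integral on the round sphere, namely $\bigl(\int_{\S^n} |\nabla_{g_0} x_i|^n \, dv_{g_0}\bigr)^{2/n} = \tfrac{n}{n+1} K_n\, w_n^{2/n}$, computed by Girouard--Nadirashvili--Polterovich.

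The paper bypasses the whole difficulty by not going through Courant--Fischer on the Gram matrix. After the Hersch balancing it applies the Borsuk--Ulam theorem recursively (the Ashbaugh--Benguria device) to choose the orthonormal basis $(e_i)$ so that $X_{e_i}\circ\phi_\xi$ is $L^2(dv_g)$-orthogonal to the eigenfunctions $f_0,\dots,f_{i-1}$; each is then a legitimate test function for $\lambda_i$ itself, giving $\int (X_{e_i}\circ\phi_\xi)^2\,dv_g \leq \lambda_i^{-1}\int |\nabla_g(X_{e_i}\circ\phi_\xi)|^2\,dv_g$. Summing over $i$ turns the left side into $\mathrm{vol}(\S^n,g)$, and H\"older applied \emph{term by term} on the right, combined with the conformal invariance of the $L^n$ gradient norm, bounds every Dirichlet integral by the same constant $\tfrac{n}{n+1}K_n\,w_n^{2/n}\,\mathrm{vol}(\S^n,g)^{1-2/n}$. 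No optimization enters. (Incidentally, your matrix route can be salvaged by replacing AM--HM with the Schur-complement inequality $(M^{-1})_{ii}\geq 1/M_{ii}$ for positive-definite $M$, which yields $\mathrm{tr}(M^{-1})\geq\sum_i s_i/T_{ii}$; bounding each $T_{ii}$ individually by H\"older then recovers the theorem and is essentially the paper's argument in matrix language.)
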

In particular, the constant \(K_n\) satisfies the following properties:
\[
K_2 = 1, \quad 1 \leq K_n \leq 1.04 \quad \text{for \(n \geq 3\)}, \quad \text{and} \quad \lim_{n \to \infty} K_n = 1.
\]

It is important to highlight that the conformal volume can be computed in various situations by considering the equality case in \eqref{ineq:ElSoufiIlias}, with explicit examples provided in \cite{el1986immersions,EddaoudiGirouard}. In the particular case of the sphere \(\mathbb{S}^n\), we have \(V_c(n, \mathbb{S}^n, [g_0]) = w_n\), therefore the conjecture \eqref{conjecturevariété} holds up to the constant \(K_n\).

This constant was first computed by Girouard, Nadirashvili, and Polterovich in \cite{GNP}, where they proved that for an odd-dimensional sphere \(\mathbb{S}^n\) endowed with a metric \(g\) conformal to the standard metric \(g_0\), then
\begin{equation}\label{GNPsphere}
    \lambda_2(\mathbb{S}^n, g) \, \text{vol}(\mathbb{S}^n, g)^{2/n} < 2^{2/n} n K_n w_n^{2/n}.
\end{equation}
They also conjectured in the same paper that this bound should hold without the constant \(K_n\), with equality occurring when the metric \(g\) degenerates into two round spheres of equal volume. This phenomenon, often referred to as bubbling, traces back to Nadirashvili \cite{nadirashvili2002isoperimetric} who established the optimal result on the $2$-sphere. Note that this phenomenon is also present in Bucur and Henrot's inequality \eqref{eq:BucurHenrot}. For a more detailed discussion, see \cite{PetridesExistence,petridesregularite,KNPP2021,KNPP2}. In other words, they conjectured that
\begin{equation}\label{GNPsphereconj}
    \lambda_2(\mathbb{S}^n, g) \, \text{vol}(\mathbb{S}^n, g)^{2/n} < 2^{2/n} n w_n^{2/n}.
\end{equation}
Over the last two decades, various contributions have led to the eventual proof of this result. Notably, Petrides \cite{petrides2014maximization} addressed the case for even-dimensional spheres, Freitas and Laugesen \cite{F-L1} improved the topological argument of symmetry used by Petrides, and finally, Kim \cite{kim2022maximization} obtained the optimal result. More recently, the author and Girouard \cite{EddaoudiGirouard} derived a general upper bound for closed manifolds in the spirit of El Soufi and Ilias's inequality \eqref{ineq:ElSoufiIlias}.

\smallskip

By considering the sum of reciprocal eigenvalues starting from \( 1/\lambda_2(M, g) \), we prove the following.

\begin{thm}\label{thmprincsurfaces}
    Let \((M,g)\) be a closed orientable surface of genus \(\gamma\), then the following inequality holds:
    \begin{equation}
    \frac{1}{\lambda_2(M,g)} + \frac{1}{\lambda_3(M,g)} + \frac{1}{\lambda_4(M,g)} \geq \frac{3 \text{Area}(g)}{16 \pi \left\lfloor \frac{\gamma+3}{2} \right\rfloor}.
    \end{equation}
\end{thm}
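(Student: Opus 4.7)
The plan is to adapt the folding strategy of Bucur--Henrot \cite{BucurHenrot} and Girouard--Nadirashvili--Polterovich \cite{GNP}, combined with a Yang--Yau conformal covering and a trace-type inequality, closely mirroring the approach behind Theorem~\ref{thmprncpalDomaine}.

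First I would fix a conformal branched covering $\phi = (\phi_1, \phi_2, \phi_3) \colon (M, [g]) \to (\S^2, g_0)$ of degree at most $d := \lfloor (\gamma+3)/2 \rfloor$ supplied by Yang--Yau, with $\sum_i \phi_i^2 \equiv 1$ and $\sum_i \int_M |\nabla \phi_i|^2\, dA_g \le 8\pi d$. Using the sphere $\S^2$ of directions as parameter space and writing $M_\pm(v)$ for the $\phi$-preimage of the two open hemispheres cut out by $\{x \cdot v = 0\}$, the odd map
\[
v \mapsto \left( \int_{M_+(v)}\! dA_g - \int_{M_-(v)}\! dA_g,\ \int_{M_+(v)}\! f_1\, dA_g - \int_{M_-(v)}\! f_1\, dA_g \right) \in \R^2
\]
(with $f_1$ the first non-trivial eigenfunction) must vanish at some $v^* \in \S^2$ by Borsuk--Ulam, producing a partition $M = M_+ \sqcup M_-$ with $\mathrm{Area}(M_\pm) = \mathrm{Area}(g)/2$ and $\int_{M_\pm} f_1\, dA_g = 0$.

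On each half I would then build three continuous functions $\psi_{\pm, i}$ $(i=1,2,3)$ by adapting the hyperbolic-cap construction of \cite{GNP}: a Hersch-type fixed-point argument over the Möbius group of $\S^2$, followed by an $O(3)$ Schur--Horn rotation of the resulting triple, arranges that $\psi_{\pm, i}$ is supported in $M_\pm$, mean-zero on $M$, with the pointwise identity $\sum_i \psi_{\pm, i}^2 = \chi_{M_\pm}$ and the equal-norm normalization $\int_M \psi_{\pm, i}^2\, dA_g = \mathrm{Area}(g)/6$; the conformal-degree bound yields $\sum_{\pm, i} \int_M |\nabla \psi_{\pm, i}|^2\, dA_g \le 8\pi d$. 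The six $\psi_{\pm, i}$ span a subspace $V \subset H^1(M)$ orthogonal to $f_0$, and intersecting with $\{f_1\}^\perp$ produces a subspace $V_0 \subset V$ of dimension at least $5$ orthogonal to both $f_0$ and $f_1$.

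Inside $V_0$ I would finally select three mutually $L^2$-orthonormal functions $e_1, e_2, e_3$ via Gram--Schmidt against $f_2$ and $f_3$, so that $e_i \perp f_0, \ldots, f_i$ is a valid test function for $\lambda_{i+1}(M, g)$. The variational principle combined with AM--HM gives
\[
\sum_{i=2}^{4} \frac{1}{\lambda_i(M, g)} \ge \sum_{j=1}^{3} \frac{1}{\int_M |\nabla e_j|^2\, dA_g} \ge \frac{9}{\sum_{j=1}^{3} \int_M |\nabla e_j|^2\, dA_g},
\]
and the equal-$L^2$-norm normalization from Step~3 reduces the denominator to $\mathrm{tr}(Q|_V) \le 48\pi d / \mathrm{Area}(g)$, yielding the announced lower bound $3\, \mathrm{Area}(g) / (16\pi d)$. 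The principal obstacle is the coordinated Hersch/Schur--Horn construction of Step~3: simultaneously enforcing the six mean-zero conditions and the common $L^2$-norm $\mathrm{Area}(g)/6$, while respecting the geometry imposed by the folding of Step~2, requires a careful extension of Hersch's topological fixed-point argument to the doubled Möbius setting on the two hemispheres, and this is the technical heart of the proof.
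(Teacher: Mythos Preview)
Your proposal has a genuine gap at Step~3, and it is precisely the step you flag as the ``principal obstacle.'' You ask for continuous functions $\psi_{\pm,i}$ supported in $M_\pm$ with $\sum_i \psi_{\pm,i}^2 = \chi_{M_\pm}$. These two requirements are incompatible: if $\psi_{+,i}$ is continuous on $M$ and vanishes on $M_-$, then it vanishes on $\partial M_+$, so $\sum_i \psi_{+,i}^2$ vanishes there too and cannot equal $\chi_{M_+}$. In other words, the six functions you build are not in $H^1(M)$, and the variational characterization of $\lambda_{i+1}$ cannot be applied to them. This is exactly the reason the folding constructions of \cite{GNP,BucurHenrot} are set up the way they are: one does \emph{not} treat the two halves independently, but rather reflects one half conformally onto the other and applies a \emph{single} M\"obius renormalization to the folded map, so that the resulting test map $M\to\S^2$ is globally continuous.

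A second, more minor issue: even granting Step~3, your trace bound is not justified. A Schur--Horn rotation can equalize the diagonal entries of the Gram matrix $\bigl(\int_{M_+}\psi_{+,i}\psi_{+,j}\bigr)_{ij}$, but it does not in general kill the off-diagonal entries, so the normalized $\psi_{\pm,i}$ need not be $L^2$-orthonormal and $\sum_{\pm,i}\|\nabla\psi_{\pm,i}\|^2/\|\psi_{\pm,i}\|^2$ need not equal $\mathrm{tr}(Q|_V)$.

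The paper's route is both simpler and avoids these problems. One takes the branched cover $\phi:M\to\S^2$, pushes forward $dv_g$, and applies the \emph{global} spherical-cap folding $F_C$ together with a single Hersch center of mass $\phi_{\xi_C}$ (exactly as in \cite{GNP,kim2022maximization}) to obtain a continuous map $\Psi=\phi_{\xi_C}\circ F_C\circ\phi:M\to\S^2$ whose three coordinates are simultaneously orthogonal to $1$ and $f_1$. Then, instead of Schur--Horn and AM--HM, one uses a recursive Borsuk--Ulam argument on the \emph{target} basis of $\R^3$ to choose $e_1,e_2,e_3$ so that $X_{e_i}\circ\Psi$ is a test function for $\lambda_{i+1}$. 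Summing $\int(X_{e_i}\circ\Psi)^2$ gives $\mathrm{Area}(g)$ exactly (since $\sum X_{e_i}^2\equiv 1$), and conformal invariance in dimension~$2$ bounds each Dirichlet energy by $16\pi d/3$, yielding the result directly. No equal-norm normalization, no orthogonality of the test functions among themselves, and no AM--HM step are needed.
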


\begin{thm}\label{thmprincsphere2}
    Let \(g\) be a metric on \(\S^n\) conformal to the standard metric \(g_0\), then the following inequality holds:
    \begin{equation}
    \frac{1}{\lambda_2(\mathbb{S}^n, g)} + \cdots + \frac{1}{\lambda_{n+2}(\mathbb{S}^n, g)} > \frac{(n+1) \, \text{vol}(\S^n, g)^{2/n}}{2^{2/n} n K_n w_n^{2/n}}.
    \end{equation}
\end{thm}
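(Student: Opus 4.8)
The plan is to run the argument behind Theorem~\ref{thmprincSphère1}, replacing Hersch's conformal renormalisation by the folding (bubbling) construction of Girouard, Nadirashvili and Polterovich~\cite{GNP}: securing orthogonality to one further eigenfunction shifts the reciprocal sum one index to the right, and the doubling inherent in a fold accounts for the factor $2^{2/n}$ in the denominator, in the same way the fold produces the $2^{2/n}$ in \eqref{GNPsphere}. First I would record the reciprocal-sum form of the variational principle used for Theorem~\ref{thmprincSphère1}. Let $f_1$ be a first eigenfunction, simple for the moment, and suppose $w_1,\dots,w_{n+1}\in H^1(\Sn)$ are $L^2(dv_g)$-orthonormal with $\int_{\Sn}w_i\,dv_g=\int_{\Sn}w_i f_1\,dv_g=0$ for all $i$. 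Applying Ky Fan's maximum principle to $\Delta_g^{-1}$ on the orthogonal complement of $\mathrm{span}\{1,f_1\}$ in $L^2(\Sn,dv_g)$ --- whose largest eigenvalues are $1/\lambda_2\ge 1/\lambda_3\ge\cdots$ --- together with the Cauchy--Schwarz inequality $\langle w_i,\Delta_g^{-1}w_i\rangle\int_{\Sn}|\nabla_g w_i|^2\,dv_g\ge\|w_i\|_{L^2}^4=1$, gives
\[
\frac{1}{\lambda_2(\Sn,g)}+\cdots+\frac{1}{\lambda_{n+2}(\Sn,g)}\;\ge\;\sum_{i=1}^{n+1}\langle w_i,\Delta_g^{-1}w_i\rangle\;\ge\;\sum_{i=1}^{n+1}\Bigl(\int_{\Sn}|\nabla_g w_i|^2\,dv_g\Bigr)^{-1}.
\]
Thus it suffices to construct such a family for which the last sum is at least $(n+1)\,\mathrm{vol}(\Sn,g)^{2/n}/(2^{2/n}nK_nw_n^{2/n})$; establishing this lower bound is where the constant $K_n$ enters, exactly as in Theorem~\ref{thmprincSphère1}, while the factor $2^{2/n}$ reflects the doubling effected by the fold.

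To build the $w_i$ --- which cannot be the coordinate functions of a balanced conformal diffeomorphism, since those are not orthogonal to $f_1$ --- I would use the hyperbolic-cap folding of~\cite{GNP}. Realising $\Sn$ as the ideal boundary of hyperbolic space in the ball model, call $F\colon\Sn\to\Sn$ a \emph{folded conformal map} if it coincides with a conformal map on one side of a totally geodesic hyperplane and with the reflection of a conformal map on the other side. Since a reflection is an isometry, $F$ is conformal off a null set, carries a well-defined conformal factor $\rho$, and $F(\Sn)$ lies in the union of a hyperbolic cap with its mirror image. A first topological (degree) argument, exactly as in Hersch's lemma, makes the centre of mass vanish, $\int_{\Sn}F_i\,dv_g=0$ for all $i$; then, letting the direction and the depth of the fold vary, a second degree argument produces a fold for which also $\int_{\Sn}F_if_1\,dv_g=0$ for all $i$ --- here one uses, in place of the parity-restricted argument of~\cite{GNP}, the refinement of the symmetry argument of Freitas and Laugesen~\cite{F-L1}, so as to cover every dimension. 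Finally a rotation in $O(n+1)$, which preserves conformality as well as both orthogonality relations, diagonalises the Gram matrix $\bigl(\int_{\Sn}F_iF_j\,dv_g\bigr)_{i,j}$, and the $L^2$-normalisations of the $F_i$ form the desired orthonormal family.

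It remains to estimate the Dirichlet energies. Since $\sum_i|\nabla_g F_i|^2=\mathrm{tr}_g(F^{*}g_0)=n\rho^2$, one has $\sum_i\int_{\Sn}|\nabla_g F_i|^2\,dv_g=n\int_{\Sn}\rho^2\,dv_g$; bounding $\int\rho^2\,dv_g$ by H\"older against $\int\rho^n\,dv_g$ --- which is the Riemannian volume of the (at most two) caps swept out by $F$, hence at most $2w_n$ --- and then evaluating the resulting cap integral and optimising over the cap exactly as in~\cite{GNP} (the computation that yields $K_n$), one controls the energies so that the last sum in the display above is bounded below by $(n+1)\,\mathrm{vol}(\Sn,g)^{2/n}/(2^{2/n}nK_nw_n^{2/n})$, the factor $2^{2/n}$ reflecting the two cap-copies; this proves the inequality. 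It is strict because equality throughout would force $\rho$ to be constant and the fold trivial, incompatible with $\int_{\Sn}F_if_1\,dv_g=0$ unless $f_1\equiv0$; the bound is only approached along metrics degenerating into two round spheres of equal volume --- the same bubbling that makes \eqref{GNPsphere} and \eqref{eq:BucurHenrot} strict. The step I expect to be the main obstacle is precisely this two-parameter degree argument: it must enforce the $n+1$ relations $\int_{\Sn}F_if_1\,dv_g=0$ while keeping the fold confined to a cap whose energy can be controlled --- the technical heart of~\cite{GNP}, run here carrying the $f_1$-moments rather than only the Rayleigh quotient of $\lambda_2$. The possible multiplicity of $\lambda_1$ is a minor point, handled by perturbing $g$ within its conformal class to make $\lambda_1$ simple, proving the strict inequality there, and passing to the limit, the description of the extremal configurations supplying the uniform gap that keeps the limiting inequality strict.
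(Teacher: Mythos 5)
Your proposal is correct and reaches the same bound, but by a genuinely different mechanism in the step that converts the two orthogonality relations into a bound on the sum of reciprocals. The paper, after producing the fold and renormalised map $\phi_{\xi_C}\circ F_C$ so that the coordinate functions $F_i=X_{e_i}\circ\phi_{\xi_C}\circ F_C$ are orthogonal to $1$ and $f_1$, applies Borsuk--Ulam \emph{recursively} (exactly as in Lemma~\ref{Topological arguments BU domains}) to rotate the basis $(e_i)$ so that $F_i$ is further orthogonal to $f_2,\dots,f_i$; each $F_i$ then lies in $A_{i+1}$, the Rayleigh inequality $\lambda_{i+1}\|F_i\|^2_{L^2}\le\int|\nabla_g F_i|^2$ is immediate, and summing $\sum_i\|F_i\|^2=\mathrm{vol}(\Sn,g)$ gives the result after the uniform H\"older/conformal-invariance bound $\int|\nabla_g F_i|^2\le 2^{2/n}\tfrac{n}{n+1}K_n w_n^{2/n}\mathrm{vol}^{1-2/n}$. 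You instead rotate the basis in $O(n+1)$ only to diagonalise the Gram matrix, producing an $L^2$-orthonormal family $w_i=F_i/\|F_i\|$ orthogonal to $1,f_1$, and then invoke Ky Fan's principle for $(-\Delta_g)^{-1}$ on $\mathrm{span}\{1,f_1\}^\perp$ together with the Cauchy--Schwarz inequality $\langle A w_i,w_i\rangle\langle A^{-1}w_i,w_i\rangle\ge\|w_i\|^4$; this gives $\sum_{j=2}^{n+2}\lambda_j^{-1}\ge\sum_i\|F_i\|^2/\int|\nabla_g F_i|^2\ge\mathrm{vol}/C$ with the same constant $C$. The two routes are equivalent in output; yours avoids the $n-1$ nested Borsuk--Ulam steps at the cost of invoking the Ky Fan maximum principle (and, for self-adjointness of the restricted inverse Laplacian, the observation that $\mathrm{span}\{1,f_1\}^\perp$ is invariant because $f_1$ is an eigenfunction), while the paper's argument is elementary beyond Borsuk--Ulam but requires more careful bookkeeping of the test-function hierarchy. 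One small sign slip: with the paper's convention $\Delta f=-\mu f$, the operator you want is $(-\Delta_g)^{-1}$ rather than $\Delta_g^{-1}$. Your closing remark about perturbing $g$ to make $\lambda_1$ simple is unnecessary here: with $\lambda_1=\lambda_2$, removing a single $f_1$ from the first eigenspace still leaves $(-\Delta_g)^{-1}$ on $\{1,f_1\}^\perp$ with leading eigenvalues $1/\lambda_2,1/\lambda_3,\dots$, so the Ky Fan bound survives verbatim.
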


\medskip

\subsection*{Plan of the Paper} 
The paper is organized as follows:

In Section 2, we first review Weinberger's center of mass argument for domains in \( \mathbb{R}^n \), followed by the folding method of Bucur and Henrot. We then examine the work of Ashbaugh and Benguria, discussing its compatibility with this folding construction, and then we conclude with the proof of Theorem \ref{thmprncpalDomaine}.

In Section 3 we deal with spheres and surfaces. After recalling the Hersch-Szeg\H{o} center of mass and the folding into spherical caps, we prove Theorem \ref{thmprincSphère1} and Theorem \ref{thmprincsphere2}. Then we conclude the section by proving Theorem \ref{thmprincsurfaces} for surfaces.
\medskip



\section{The center of mass and folding via Bucur and Henrot's approach}
Let \(\Omega\) be a bounded domain in \(\mathbb{R}^n\). Throughout the paper, we define \( R_\Omega \) as the radius of a ball with the same volume as \(\Omega\), \( r_\Omega \) as the radius of a ball whose volume is \(\frac{|\Omega|}{2}\), and \( B_R \) as the ball of radius \( R \) centered at the origin.

\subsection{Center of mass argument: the Weinberger way}
Let $A$ a point in $\R^n$, and $d_A(x)$ the distance function from $x$ to $A$. For any $R >0$, Weinberger \cite{Weinberger} introduced the following function $g_A : \R^n \to \R^n$ defined as:
\begin{equation}\label{FonctionWeinberger}
    g_A(x) = \frac{G_R(d_A(x))}{d_A(x)} \overrightarrow{Ax},
\end{equation}
with  \(G_R : [0, +\infty) \to \mathbb{R}\) is the function defined as
\[
G_R(t) = 
\begin{cases}
g(t), & t \leq R, \\
g(R), & t \geq R,
\end{cases}
\]
where \(g\) is a radial function associated with the Bessel function of the first kind
$$g(|x|) = |x|^{1 - \frac{n}{2}} J_{n/2}\left(\sqrt{\mu_1(B_R)}|x|\right).$$
In particular, the function \(g\) satisfies the differential equation
\[
g''(t) + \frac{n-1}{t} g'(t) + \left( \mu_1(B_R) - \frac{n-1}{t^2} \right) g(t) = 0,
\]
with boundary conditions
\[
g(0) = 0, \quad g'(R) = 0. 
\]
 Using Brouwer fixed theorem, Weinberger \cite{Weinberger} proved that for $R= R_\Omega$, there exists a point $A \in \R^n$ such that for any orthonormal basis \(e_i\) of $\R^n$
\[
\int_\O g_A(x) \cdot e_i \, dx=0.
\]
Therefore, by the variational characterization of the first Neumann eigenvalue $\mu_1(\Omega)$, after summing up on the components of $g_A$, he obtained
\[
\mu_1(\Omega) \leq 
\frac{\int_\Omega G'_{R_\Omega}(d_A(x))^2 + (N-1) \frac{G_{R_\Omega}(d_A(x))^2}{d_A(x)^2} \, dx}
{\int_\Omega G_{R_\Omega}(d_A(x))^2 \, dx}.
\]
Weinberger's reasoning proceeds by comparing the right-hand side to that of the ball \( B_{R_\Omega} \). This approach is known as \textit{mass displacement}. Finally, he proved that
\[
\mu_1(\Omega) \leq \mu_1(B_{A,R_\Omega}),
\]
where \( B_{A,R_\Omega} \) denotes the ball centered at \( A \) with radius \( R_\Omega \).

\subsection{Folding through hyperplanes}
In their work \cite{BucurHenrot}, Bucur and Henrot extended Weinberger's construction by folding the function $g_A$, thereby allowing it to satisfy two orthogonality conditions simultaneously.
 This mechanism has been well-established in various situations, including spheres, simply connected planar domains and hyperbolic domains, see \cite{nadirashvili2002isoperimetric, GNP, petrides2014maximization, kim2022maximization, kim2024second, F-L1, GP, GL}.  Brief summary of this construction is provided below.
\smallskip

Let \( A \) and \( B \) be two distinct points in \( \mathbb{R}^n \). We define the linear part of the symmetry operator 
with respect to the hyperplane \( H_{AB} \), which is the mediator of the segment \( AB \), as
\[
T_{AB} : \mathbb{R}^n \to \mathbb{R}^n, \quad T_{AB}(v) = v - 2 (\vec{ab} \cdot v) \vec{ab},
\]
where 
\[
\vec{ab} = \frac{\overrightarrow{AB}}{\|\overrightarrow{AB}\|}.
\]
The hyperplane \( H_{AB} \) divides \( \mathbb{R}^n \) into two half-spaces: \( H_A \), containing \( A \), and \( H_B \), containing \( B \). Using these, we define the function $g_{AB} : \mathbb{R}^n \to \mathbb{R}^n$ by
\[
g_{AB}(x) =
\begin{cases} 
g_A(x), & \text{if } x \in H_A, \\
T_{AB}(g_B(x)), & \text{if } x \in H_B.
\end{cases}
\]
Here, \( g_A \) and \( g_B \) are the Weinberger functions from equation \eqref{FonctionWeinberger}, based on \( G_{r_\Omega} \). To be precise, on each of the half-spaces \( H_A \) and \( H_B \), the restriction of \( g_{AB} \) acts like a Weinberger function associated with a ball of half measure. Moreover, it also remains continuous along the hyperplane \( H_{AB} \).

Denoting by \( f_1 \) the first non-constant eigenfunction of \( \Omega \), Bucur and Henrot \cite{BucurHenrot}[Proposition 7] proved that for \( R = r_\Omega \), there exist \( A, B \in \mathbb{R}^n \) such that
\begin{equation}\label{FonctionBucuHenrot}
    \forall i = 1, \dots, n, \quad
\int_\Omega g_{AB} \cdot e_i \, dx = 
\int_\Omega g_{AB} \cdot e_i f_1 \, dx = 0.
\end{equation}
Therefore, by the variational characterization of $\mu_2(\Omega)$, this construction ensures that:
\begin{equation}
    \mu_2(\Omega) \leq 
\frac{\sum_{i=1}^n \int_\Omega |\nabla (g_{AB} \cdot e_i)|^2 \, dx}
{\sum_{i=1}^n \int_\Omega |g_{AB} \cdot e_i|^2 \, dx}.
\end{equation}
Splitting these two integrals over \( \Omega \cap H_A \) and \( \Omega \cap H_B \), and using a mass displacement argument on each of these two sets as done in the proof by Weinberger, they obtained 
\begin{equation}\label{ineq: strictBH}
    \mu_2(\Omega) \leq \mu_1(B_{r_\Omega}). 
\end{equation}
where  $\mu_1(B_{r_\Omega})$ is the first eigenvalue of a ball having a volume of \( \frac{|\Omega|}{2} \).

Since this construction guarantees that equality can only occur when the domain consists of two balls of equal volume, inequality \eqref{ineq: strictBH} is always strict.

\subsection{Compatibility of Ashbaugh and Benguria's topological argument with the folding method}
Building on the Weinberger's center of mass argument and using Borsuk-Ulam theorem, Ashbaugh and Benguria \cite{AshBenConjec} constructed a family of test functions for \( \mu_i(\Omega) \), where \( i = 1, \dots, n \). These functions are essentially components of the coordinate functions of \( \mathbb{R}^n \), but with a careful selection of an appropriate basis for projection. They then proceeded by estimating the Rayleigh quotient using a symmetrization as in the work of Chiti \cite{chiti1983bound}.

In a more recent development, Wang and Xia \cite{Xia2018OnAC} sought to refine Ashbaugh and Benguria's approach by considering the components of the function \( g_A \) as the test functions. While they didn’t manage to prove the full conjecture of Ashbaugh and Benguria, their work resulted in a significant progress, leading to an optimal bound for the $n-1$ first terms, see inequality \eqref{eq:Wang Xia Neumann}.

One might emphasize that the use of topological arguments to construct test functions is a well-known approach that can yield optimal bounds for eigenvalues. One of the key contribution of this paper is that the folding argument developed by Bucur and Henrot, which is also topological in nature, integrates seamlessly with the strategy employed by Ashbaugh and Benguria. This compatibility enables us to use the components of \( g_{AB} \) as test functions for \( \mu_{i+1}(\Omega) \), where \( i = 1, \dots, n \). The proof of Theorem \ref{thmprncpalDomaine} then proceeds in a manner similar to the Rayleigh quotient estimates developed by Wang and Xia.
\smallskip

Let \( f_i \) be the eigenfunction associated with the Neumann eigenvalue \( \mu_i \) on \( \Omega \), as defined in the introduction, and let $A$ and $B$ two point in $\R^n$ such that \( g_{AB} \) satisfies inequality \eqref{FonctionBucuHenrot}. In the following lemma, we show that the components of \( g_{AB} \)  can be chosen as test functions for \( \mu_{i+1}(\Omega) \), where \( i = 1, \dots, n \).

\begin{lemma}\label{Topological arguments BU domains}
Let \( \Omega \) be a regular bounded domain in \( \mathbb{R}^n \). Then, there exists an orthonormal basis \( (e_i) \) of \( \mathbb{R}^n \) such that for \( i = 1, \dots, n \), we have:
\begin{equation}
  \mu_{i+1}(\Omega) \int_\Omega |g_{AB}(x) \cdot e_i|^2 \, dx \leq \int_\Omega |\nabla g_{AB}(x) \cdot e_i|^2 \, dx.
\end{equation}
\end{lemma}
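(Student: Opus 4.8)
The plan is to run a Borsuk–Ulam / topological degree argument on the space of orthonormal frames, exactly in the spirit of Ashbaugh–Benguria, but applied to the folded field $g_{AB}$ rather than to $g_A$. Fix the pair of points $A,B\in\mathbb R^n$ given by Bucur and Henrot's Proposition 7, so that $g_{AB}$ already satisfies the $2n$ orthogonality conditions \eqref{FonctionBucuHenrot}: namely $\int_\Omega g_{AB}\cdot e\,dx = 0$ and $\int_\Omega (g_{AB}\cdot e)\,f_1\,dx = 0$ for every $e\in\mathbb R^n$ (both integrals are linear in $e$, so vanishing on a basis forces vanishing for all $e$). In particular, \emph{every} component $g_{AB}\cdot e$ is already a legitimate test function for $\mu_2(\Omega)$, which handles the case $i=1$ regardless of the choice of frame. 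The work is to rotate the frame so that, in addition, the first component is admissible for $\mu_2$, the second for $\mu_3$, \dots, and the $n$-th for $\mu_{n+1}$.

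First I would set up the right map. For an orthonormal frame $E=(e_1,\dots,e_n)$ define, for $1\le i\le n-1$ and $1\le j\le i$, the quantities $c_{ij}(E) = \int_\Omega (g_{AB}\cdot e_i)\,f_{j+1}\,dx$; collecting these gives a map $\Phi$ from the orthogonal group (or better, the flag manifold / Stiefel manifold of frames) into $\mathbb R^{N}$ with $N = \sum_{i=1}^{n-1} i = \binom n2$. The goal is to find $E$ with $\Phi(E)=0$: for such a frame, $g_{AB}\cdot e_i$ is orthogonal in $L^2(\Omega)$ to the constants and to $f_1$ (automatic from \eqref{FonctionBucuHenrot}) and also to $f_2,\dots,f_i$, hence lies in the admissible space $A_{i+1}$ for $\mu_{i+1}(\Omega)$; the Rayleigh characterization \eqref{caractérisation variationnelle Domaine} then yields the claimed inequality $\mu_{i+1}(\Omega)\int_\Omega|g_{AB}\cdot e_i|^2 \le \int_\Omega|\nabla(g_{AB}\cdot e_i)|^2$ for every $i$. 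I would build the frame inductively: having fixed a subspace spanned by $e_{i+1},\dots,e_n$, one chooses $e_i$ inside the orthogonal complement; the relevant configuration space at each stage is a sphere $\mathbb S^{n-i}$ (up to sign), and the constraint to be killed at stage $i$ is $i-1$-dimensional (orthogonality to $f_2,\dots,f_i$ — note orthogonality to constants and $f_1$ costs nothing). So at each step one needs to solve $i-1$ equations on $\mathbb S^{n-i}$; since $n-i \ge i-1$ exactly when $i \le (n+1)/2$, a naive odd-map/Borsuk–Ulam argument is not immediately dimension-balanced, and the bookkeeping must be done globally rather than step by step.

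The cleaner route, which I expect to be the one taken, is the global one: parametrize frames by $SO(n)$ (or by the full flag manifold $SO(n)/(SO(1)^n)$, real dimension $\binom n2$), observe that $\Phi$ is an odd map under the antipodal-type involution coming from $e_i\mapsto -e_i$, and invoke a Borsuk–Ulam-type theorem for the flag manifold — this is precisely the topological input Ashbaugh and Benguria used, and the point the author stresses is that it is \emph{unaffected} by replacing $g_A$ with the folded $g_{AB}$, because foldedness enters only through the mass-displacement estimate of the Rayleigh quotient, not through the linear-algebraic orthogonality structure. I would therefore: (1) recall the precise statement of the topological lemma from \cite{AshBenConjec} (existence of a frame annihilating the lower-triangular array of inner products); (2) check that $g_{AB}$ has the same formal properties used there — each component $g_{AB}\cdot e$ lies in $H^1(\Omega)$ (continuity of $g_{AB}$ across $H_{AB}$, established in the construction, plus boundedness of $G_{r_\Omega}$), and the map $E\mapsto(c_{ij}(E))$ depends continuously on $E$ and has the right equivariance; (3) conclude the existence of $E$ with $\Phi(E)=0$; (4) read off admissibility and apply \eqref{caractérisation variationnelle Domaine}.

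**Main obstacle.** The delicate point is not the Rayleigh step — that is immediate once the frame is found — but verifying that the folded field $g_{AB}$ really does slot into the Ashbaugh–Benguria topological machine with no loss. Two things need care: the regularity/integrability of $g_{AB}\cdot e_i$ and especially of $\nabla(g_{AB}\cdot e_i)$ near the folding hyperplane $H_{AB}$ (the gradient can jump across $H_{AB}$, but since $g_{AB}$ is Lipschitz there the weak gradient is still in $L^2$, so each component is a valid $H^1(\Omega)$ competitor), and the precise form of the equivariance that makes the Borsuk–Ulam argument go through — one must confirm that the natural sign-involution on frames acts on the target $\mathbb R^{\binom n2}$ in the way required by the topological theorem (the inner products $c_{ij}$ are linear, hence odd, in $e_i$, which is exactly what is needed). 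Once these compatibilities are in place the lemma follows; the subsequent Rayleigh-quotient estimates à la Wang–Xia are then what turn this lemma into Theorem \ref{thmprncpalDomaine}.
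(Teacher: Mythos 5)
Your central observation matches the paper exactly: the Bucur--Henrot orthogonality conditions \eqref{FonctionBucuHenrot} make \emph{every} component $g_{AB}\cdot e$ orthogonal to constants and to $f_1$, so the Ashbaugh--Benguria topological mechanism applies to $g_{AB}$ verbatim, and what remains is a pure frame-selection problem that the folding does not perturb. Where you part ways with the paper is in the choice of topological tool, and the reason you give for that choice rests on a dimension miscount.

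You write that at stage $i$, having fixed $e_{i+1},\dots,e_n$, the configuration space is $\mathbb{S}^{n-i}$ with $i-1$ constraints, so the count balances only for $i\le(n+1)/2$, and you conclude that a stepwise Borsuk--Ulam recursion cannot work and one must pass to the full flag manifold. But if $e_{i+1},\dots,e_n$ are fixed, the orthogonal complement in $\mathbb{R}^n$ has dimension $n-(n-i)=i$, so its unit sphere is $\mathbb{S}^{i-1}$, not $\mathbb{S}^{n-i}$; the number of constraints at that stage (orthogonality to $f_2,\dots,f_i$) is $i-1$. The recursion is therefore tightly balanced at every stage: $i-1$ odd continuous equations on $\mathbb{S}^{i-1}$. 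The sphere $\mathbb{S}^{n-i}$ you quote is what one gets by ascending from $e_1$ (fixing $e_1,\dots,e_{i-1}$ and picking $e_i$ in a space of dimension $n-i+1$), and there the count does fail for $i>(n+1)/2$; the fix is simply to descend. That descending recursion is precisely the paper's proof: define $F_n\colon\mathbb{S}^{n-1}\to\mathbb{R}^{n-1}$ with components $\int_\Omega(g_{AB}\cdot p)f_j\,dx$ for $j=2,\dots,n$, find a zero $e_n$ by Borsuk--Ulam, restrict to $\mathbb{S}^{n-2}\subset e_n^\perp$ and find $e_{n-1}$ as a zero of $F_{n-1}\colon\mathbb{S}^{n-2}\to\mathbb{R}^{n-2}$, continue down to $e_2$, then set $e_1=e_n\wedge\dots\wedge e_2$ and note that $g_{AB}\cdot e_1$ is automatically admissible for $\mu_2$ by \eqref{FonctionBucuHenrot}. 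Your proposed global flag-manifold Borsuk--Ulam would presumably also produce the frame (the total constraint count $\binom{n}{2}$ does match the dimension of the frame manifold), but it is heavier machinery than the problem requires, and your stated motivation for reaching for it --- the supposed imbalance of the stepwise recursion --- is the one genuine error in the proposal.
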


 \begin{proof}
     
Consider the map \( F_{n} : \mathbb{S}^{n-1} \to \mathbb{R}^{n-1} \) defined as
\[
F_{n}(p) =
\begin{bmatrix} 
\int_{\Omega} g_{AB}(x) \cdot p \times f_2(x) \, dx \\
\vdots \\
\int_{\Omega} g_{AB}(x) \cdot p f_{n}(x) \, dx
\end{bmatrix}.
\]

Is is straight forward to see that $F_n$ is an odd and continuous map, so by the Borsuk-Ulam theorem, there exists a vector \( e_{n} \) such that $$ F_{n}(e_{n}) = 0.$$ 
Next, let \( H \) be the hyperplane in \( \mathbb{R}^{n} \) orthogonal to \( e_{n} \), and consider the map \( F_{n-1} \) defined on the unit sphere \( \mathbb{S}^{n-2} \subset H \) with values in \( \mathbb{R}^{n-2} \) by
\[
F_{n-1}(p) =
\begin{bmatrix} 
\int_{\Omega} g_{AB}(x) \cdot p f_2(x) \, dx \\
\vdots \\
\int_{\Omega} g_{AB}(x) \cdot p f_{n-1}(x) \, dx
\end{bmatrix}.
\]
By a second use of the Borsuk-Ulam theorem, there exists a vector \(e_{n-1} \in \mathbb{S}^{n-1}\) orthogonal to \(e_{n}\) such that \(F_{n-1}(e_{n-1}) = 0\). Continuing in this manner, for all \(i = 2, \dots, n\), there exists a sequence of vectors \(e_i \in \mathbb{S}^{n-1}\), pairwise orthogonal, such that $$F_i(e_i) = 0.$$ 
We complete this family of vectors by \(e_1 = e_{n} \wedge \dots \wedge e_2\) so that it forms an orthonormal basis of \(\mathbb{R}^{n}\).

\end{proof}

Using this lemma, we can proceed to prove Theorem\ref{thmprncpalDomaine}.

\begin{proof}[Proof of Theorem \ref{thmprncpalDomaine}]
The construction above ensures that there exists an orthonormal basis \( (e_i) \) of \( \mathbb{R}^n \) such that for all \( i = 1, \dots, n \)

\begin{equation}\label{eq: fcttest}
  \mu_{i+1}(\Omega) \int_\Omega  \phi_i^2 \, dx \leq \int_\Omega |\nabla  \phi_i|^2 \, dx,  
\end{equation}
where \( \phi_i := g_{AB} \cdot e_i \). 

First, we develop inequality \eqref{eq: fcttest} by splitting the gradient of \( \phi_i \) on each of \( \Omega \cap H_A \) and \( \Omega \cap H_B \). We get the following expression for \( |\nabla \phi_i|^2 \)
\begin{equation}\label{gradient non simplifié}
|\nabla \phi_i|^2 =
\begin{cases} 
G'_{r_\Omega}(d_A(x))^2 \frac{|\overrightarrow{Ax} \cdot e_i|^2}{d_A(x)^2} +  \frac{G_{r_\Omega}(d_A(x))^2}{d_A(x)^2} \left( 1 - \frac{|\overrightarrow{Ax} \cdot e_i|^2}{d_A(x)^2} \right), & \text{if } x \in \Omega \cap H_A, \\
G'_{r_\Omega}(d_B(x))^2 \frac{|\overrightarrow{Bx} \cdot e_i|^2}{d_B(x)^2} + \frac{G_{r_\Omega}(d_B(x))^2}{d_B(x)^2} \left( 1 - \frac{|\overrightarrow{Bx} \cdot e_i|^2}{d_B(x)^2} \right), & \text{if } x \in \Omega \cap H_B. \\
\end{cases}
\end{equation}
For simplification, we adopt the following notation
$$x_i = \overrightarrow{Ax} \cdot e_i, \quad |x| = d_A(x), \quad y_i = \overrightarrow{Bx} \cdot e_i, \quad |y| = d_B(x), \quad \mu_i(\Omega) = \mu_i, \quad G_{r_\Omega} = G.$$
Thus, equation \eqref{gradient non simplifié} simplifies to
\begin{equation}\label{gradient simplifié}
|\nabla \phi_i|^2 =
\begin{cases} 
\frac{G(|x|)^2}{|x|^2} + \left( G'(|x|)^2 - \frac{G(|x|)^2}{|x|^2} \right) \frac{x_i^2}{|x|^2}, & \text{if } x \in \Omega \cap H_A, \\[10pt]
\frac{G(|y|)^2}{|y|^2} + \left( G'(|y|)^2 - \frac{G(|y|)^2}{|y|^2} \right) \frac{y_i^2}{|y|^2}, & \text{if } x \in \Omega \cap H_B.
\end{cases}
\end{equation}
Next, we divide inequality \eqref{eq: fcttest} by \( \mu_{i+1}(\Omega) \) and substitute the gradient expression from \eqref{gradient simplifié}. This leads to the following inequality for \( i = 1, \dots, n \)

\begin{align*}
\int_\Omega \phi_i^2 \, dx &\leq \frac{1}{\mu_{i+1}}  \int_{\Omega \cap H_A} \left( \frac{G(|x|)^2}{|x|^2} + \left(G'(|x|)^2 - \frac{G(|x|)^2}{|x|^2}\right) \frac{x_i^2}{|x|^2} \right) \, dx \\
&\quad + \frac{1}{\mu_{i+1}} \int_{\Omega \cap H_B} \left( \frac{G(|y|)^2}{|y|^2} + \left(G'(|y|)^2 - \frac{G(|y|)^2}{|y|^2}\right) \frac{y_i^2}{|y|^2} \right) \, dx.
\end{align*}
Summing from \( i = 1, \dots, n \), we obtain
\begin{align}
\sum_{i=1}^{n} \int_\Omega \phi_i^2 \, dx 
&\leq  \sum_{i=1}^{n} \frac{1}{\mu_{i+1}} \int_{\Omega \cap H_A} \left(  \frac{G(|x|)^2}{|x|^2} + \left(G'(|x|)^2 - \frac{G(|x|)^2}{|x|^2}\right) \frac{x_i^2}{|x|^2} \right) \, dx \label{premièreintégrale} \\
&\quad +  \sum_{i=1}^{n} \frac{1}{\mu_{i+1}} \int_{\Omega \cap H_B} \left(  \frac{G(|y|)^2}{|y|^2} + \left(G'(|y|)^2 - \frac{G(|y|)^2}{|y|^2}\right) \frac{y_i^2}{|y|^2} \right) \, dx. \label{deuxièmeintégrale}
\end{align}
Observe that the left-hand side \( \sum_{i=1}^{n} \int_\Omega \phi_i^2 \, dx \) is simply the same \( L^2 \) norm of the test function \( g_{AB} \) used by Bucur and Henrot. It consists of the sum of two Weinberger functions, one for each of the sets \( \Omega \cap H_A \) and \( \Omega \cap H_B \),

\[
\sum_{i=1}^{n} \int_\Omega \phi_i^2 \, dx = \int_{\Omega \cap H_A} G_{r_\Omega}^2(d_A(x)) \, dx + \int_{\Omega \cap H_B} G_{r_\Omega}^2(d_B(x)) \, dx.
\]

In order to estimate the right-hand side of inequality \eqref{premièreintégrale}, we apply the idea of Wang and Xia \cite{Xia2018OnAC} to each of these two integrals. First, note that
\[
\sum_{i=1}^{n} \frac{1}{\mu_{i+1}} \frac{x_i^2}{|x|^2} = \sum_{i=1}^{n-1} \frac{1}{\mu_{i+1}} \frac{x_i^2}{|x|^2} + \frac{1}{\mu_{n+1}} \frac{x_n^2}{|x|^2},
\]
and also that
\[
\frac{x_n^2}{|x|^2} = 1 - \sum_{i=1}^{n-1} \frac{x_i^2}{|x|^2}.
\]
Then it follows that
\[
\sum_{i=1}^{n} \frac{1}{\mu_{i+1}} \frac{x_i^2}{|x|^2} = \sum_{i=1}^{n-1} \frac{1}{\mu_{i+1}} \frac{x_i^2}{|x|^2} + \frac{1}{\mu_{n+1}} \left( 1 - \sum_{i=1}^{n-1} \frac{x_i^2}{|x|^2} \right).
\]
Substituting in the first integral over \( \Omega \cap H_A \), we get
\begin{equation}
\begin{split}
\sum_{i=1}^{n} \frac{1}{\mu_{i+1}} \int_{\Omega \cap H_A} &\left( G'(|x|)^2 - \frac{G(|x|)^2}{|x|^2} \right) \frac{x_i^2}{|x|^2} \, dx \\
&= \sum_{i=1}^{n-1} \frac{1}{\mu_{i+1}} \int_{\Omega \cap H_A} \left( G'(|x|)^2 - \frac{G(|x|)^2}{|x|^2} \right) \frac{x_i^2}{|x|^2} \, dx \\
&\quad + \frac{1}{\mu_{n+1}} \int_{\Omega \cap H_A} \left( G'(|x|)^2 - \frac{G(|x|)^2}{|x|^2} \right) \, dx \\
&\quad - \frac{1}{\mu_{n+1}} \int_{\Omega \cap H_A} \left( G'(|x|)^2 - \frac{G(|x|)^2}{|x|^2} \right) \sum_{i=1}^{n-1} \frac{x_i^2}{|x|^2} \, dx \\
&= \sum_{i=1}^{n-1} \int_{\Omega \cap H_A} \left( \frac{1}{\mu_{i+1}} - \frac{1}{\mu_{n+1}} \right) \left( G'(|x|)^2 - \frac{G(|x|)^2}{|x|^2} \right) \frac{x_i^2}{|x|^2} \, dx \\
&\quad + \frac{1}{\mu_{n+1}} \int_{\Omega \cap H_A} \left( G'(|x|)^2 - \frac{G(|x|)^2}{|x|^2} \right) \, dx.
\label{eq: égalité sur H_A}
\end{split}
\end{equation}
From the properties of Bessel functions, see Wang and Xia \cite{Xia2018OnAC}[Lemma 2.2] for instance, we have for that for all \( 0 < t \leq r_\Omega \),
$$ G'(t)^2 - \frac{G(t)^2}{t^2} \leq 0. $$
Therefore, this shows that
$$ \sum_{i=1}^{n-1} \int_{\Omega \cap H_A} \left( \frac{1}{\mu_{i+1}} - \frac{1}{\mu_{n+1}} \right) \left( G'(|x|)^2 - \frac{G(|x|)^2}{|x|^2} \right) \frac{x_i^2}{|x|^2} \, dx \leq 0. $$
Eliminating this negative term in inequality \eqref{eq: égalité sur H_A}, it becomes
\begin{align}
\sum_{i=1}^{n} \frac{1}{\mu_{i+1}} \int_{\Omega \cap H_A} \left( G'(|x|)^2 - \frac{G(|x|)^2}{|x|^2} \right) \frac{x_i^2}{|x|^2} \, dx 
&\leq \frac{1}{\mu_{n+1}} \int_{\Omega \cap H_A} \left( G'(|x|)^2 - \frac{G(|x|)^2}{|x|^2} \right) \, dx. \nonumber
\end{align}
Now, returning to the first term in inequality \eqref{premièreintégrale}, since
\begin{align}
    \sum_{i=1}^{n} \frac{1}{\mu_{i+1}} \int_{\Omega \cap H_A} \frac{G(|x|)^2}{|x|^2} \, dx 
    &+ \frac{1}{\mu_{n+1}} \int_{\Omega \cap H_A} \left( G'(|x|)^2 - \frac{G(|x|)^2}{|x|^2} \right) \, dx \notag \\
    &= \frac{1}{\mu_{n+1}} \int_{\Omega \cap H_A} G'(|x|)^2 \, dx \notag \\
    &+ \sum_{i=1}^{n-1} \frac{1}{\mu_{i+1}} \int_{\Omega \cap H_A} \frac{G(|x|)^2}{|x|^2} \, dx,
\end{align}

we obtain that
\begin{align}
    \sum_{i=1}^{n} \frac{1}{\mu_{i+1}} \int_{\Omega \cap H_A} &\left( \frac{G(|x|)^2}{|x|^2} + \left(G'(|x|)^2 - \frac{G(|x|)^2}{|x|^2}\right) \frac{x_i^2}{|x|^2} \right) \, dx \notag \\
    &\leq \frac{1}{n-1} \sum_{i=1}^{n-1} \frac{1}{\mu_{i+1}} \int_{\Omega \cap H_A} \left( G'(|x|)^2 + (n-1) \frac{G(|x|)^2}{|x|^2} \right) \, dx. 
\end{align}

By symmetry with respect to the point \( B \), the second term on the right-hand side of inequality \eqref{deuxièmeintégrale} yields the same estimates on \( \Omega \cap H_B \),
\begin{align}
    \sum_{i=1}^{n} \frac{1}{\mu_{i+1}} \int_{\Omega \cap H_B} &\left( \frac{G(|y|)^2}{|y|^2} + \left(G'(|y|)^2 - \frac{G(|y|)^2}{|y|^2}\right) \frac{y_i^2}{|y|^2} \right) \, dx \notag \\
    &\leq \frac{1}{n-1} \sum_{i=1}^{n-1} \frac{1}{\mu_{i+1}} \int_{\Omega \cap H_B} \left( G'(|y|)^2 + (n-1) \frac{G(|y|)^2}{|y|^2} \right) \, dx.
\end{align}

Hence, replacing these estimates in \eqref{premièreintégrale} and \eqref{deuxièmeintégrale} gives:
\begin{align}\label{inégalité finale avant deplacement masse}
    &\frac{\int_{\Omega \cap H_A} G(|x|)^2 \, dx 
    + \int_{\Omega \cap H_B} G(|y|)^2 \, dx}
    { \int_{\Omega \cap H_A} \left( G'(|x|)^2 + (n-1) \frac{G(|x|)^2}{|x|^2} \right) \, dx 
    + \int_{\Omega \cap H_B} \left( G'(|y|)^2 + (n-1) \frac{G(|y|)^2}{|y|^2} \right) \, dx } \notag \\
    &\leq \frac{1}{n-1} \sum_{i=1}^{n-1} \frac{1}{\mu_{i+1}}.
\end{align}

It remains to estimate the left-hand side in inequality \eqref{inégalité finale avant deplacement masse}, and this is done using the mass displacement argument, as we explained in the previous section through the work of Bucur and Henrot \cite{BucurHenrot}. We then have
\begin{align}\label{ineq: strict}
    &\frac{
    \int_{\Omega \cap H_A} G(|x|)^2 \, dx + \int_{\Omega \cap H_B} G(|y|)^2 \, dx
    }{
    \int_{\Omega \cap H_A} \left( G'(|x|)^2 + (n-1) \frac{G(|x|)^2}{|x|^2} \right) \, dx 
    + \int_{\Omega \cap H_B} \left( G'(|y|)^2 + (n-1) \frac{G(|y|)^2}{|y|^2} \right) \, dx 
    } \notag \\
    &\geq \frac{
    \int_{B_{r_\Omega}} G(|x|)^2 \, dx 
    }{
    \int_{B_{r_\Omega}} \left( G'(|x|)^2 + (n-1) G(|x|)^2 \right) \, dx
    } \notag \\
    &= \frac{1}{\mu_1(B_{r_\Omega})},
\end{align}
where we recall that $B_{r_\Omega}$ is a ball centred at the origin having a volume of $\frac{|\O|}{2}$.

Since \( \mu_1(B_{r_\Omega}) = \mu_2(\B^n \sqcup \B^n) \frac{|\B^n \sqcup \B^n|^{2/n}}{|\Omega|^{2/n}} \), we finally deduce that
\begin{equation}
    \sum_{i=1}^{n-1} \frac{1}{\mu_{i+1}} \geq \frac{(n-1)|\Omega|^{2/n}}{\mu_2(\B^n \sqcup \B^n) |\B^n \sqcup \B^n|^{2/n}}.
\end{equation}

To conclude the proof of Theorem \ref{thmprncpalDomaine}, we analyze the case of equality. Any displacement of mass on a set of positive measure would make inequality \eqref{ineq: strict} strict. Therefore, equality can only hold if both \( H_A \) and \( H_B \) are balls, each with mass \(\frac{|\Omega|}{2}\), except possibly on a set of measure zero. Hence, when equality is achieved, \( \Omega \) is almost everywhere equivalent to the union of two disjoint balls, each having mass \(\frac{|\Omega|}{2}\). This completes the proof of Theorem \ref{thmprncpalDomaine}.

\end{proof}

\section{Estimates on the sum of the reciprocals of eigenvalues for spheres and surfaces}

In this section, we focus on the case of spheres and surfaces. First, we recall the Hersch-Szeg\H{o} center of mass argument, and then we proceed to discuss the corresponding folding method relevant to this situation. The general topological framework is similar to the case of domains established in the previous section.

Let \( \mathbb{S}^n \) be the unit sphere in \( \mathbb{R}^{n+1} \) equipped with a metric \( g \) that is conformal to the standard metric \( g_0 \). We use the standard variational characterization of \( \lambda_k(\mathbb{S}^n, g) \)
\begin{gather}\label{caractérisation variationnelle Sphère}
\lambda_k(\mathbb{S}^n, g) = \min_{f \in A_k \setminus \{0\}} \frac{\int_{\mathbb{S}^n} |\nabla f|^2 \, dv_g}{\int_{\mathbb{S}^n} f^2 \, dv_g},
\end{gather}
where \( A_k \) is the following subspace of the Sobolev space \( H^1(\mathbb{S}^n, g) \),
\[
A_k = \left\{ f \in H^1(\mathbb{S}^n, g) \mid \int_{\mathbb{S}^n} f f_j \, dv_g = 0 \text{ for } j = 0, 1, \dots, k-1 \right\}.
\]
Here, \( (f_j)_{j \geq 0} \) is an orthonormal basis of \( L^2(\mathbb{S}^n, g) \) corresponding to the eigenvalues \( \lambda_j(\mathbb{S}^n, g) \).

\subsection{Hersch-Szeg\H{o} center of mass of a Borel measure on a sphere}

For any \( v \in \mathbb{S}^n \), define the coordinate functions \( X_v(x) = \langle v, x \rangle \)  on the sphere \( \mathbb{S}^n \). These functions form the corresponding eigenspace to the eigenvalue \( \lambda_1(\mathbb{S}^n, g_0) = n \).

For each \( \xi \in \mathbb{B}^{n+1} \subset \mathbb{R}^{n+1} \), define the map \( \phi_\xi: \mathbb{S}^n \to \mathbb{S}^n \) as follows:
\begin{equation}\label{defiphi_xi}
    \phi_{\xi}(x) = \xi + \frac{1 - |\xi|^2}{|x + \xi|^2}(x + \xi).
\end{equation}
The family of transformations \( \phi_\xi \) are conformal automorphisms of \( \mathbb{S}^n \) and are particularly useful for renormalizing the center of mass of a given Borel measure, it has been widely used in the literature for deriving various upper bounds for eigenvalues of a given spectral problem. A recent version of this result can be found in Laugesen~\cite[Corollary 5]{Laugesencenterofmass}.

\begin{lemma}[Hersch-Szeg\H{o} center of mass]\label{lemmeHersch}
Let \( \mu \) be a Borel measure on the unit sphere \( \mathbb{S}^n \), with \( n \geq 1 \), satisfying \( 0 < \mu(\mathbb{S}^n) < \infty \). 
If for all \( y \in \mathbb{S}^n \),
\begin{gather}\label{ineq:halfmass}
    \mu(\{y\}) < \frac{1}{2}\mu(\mathbb{S}^n),
\end{gather}
then a unique point \( \xi = \xi(\mu) \in \mathbb{B}^{n+1} \) exists such that:
\begin{equation}\label{mesurenonatomique}
    \int_{\mathbb{S}^n} \phi_{\xi} \, d\mu = 0.
\end{equation}
This point \( \xi(\mu) \) depends continuously on the measure \( \mu \). That is, if \( \mu \) satisfies~\eqref{ineq:halfmass} and if \( \mu_k \to \mu \) weakly, where each \( \mu_k \) also satisfies~\eqref{ineq:halfmass}, then \( \xi(\mu_k) \to \xi(\mu) \) as \( k \to \infty \).
\end{lemma}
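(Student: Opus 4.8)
# Proof Proposal for the Hersch-Szegő Center of Mass Lemma

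The plan is to prove this via a degree-theoretic / continuity argument: we show that the map $\xi \mapsto \int_{\mathbb{S}^n} \phi_\xi \, d\mu$ from the open ball $\mathbb{B}^{n+1}$ to $\mathbb{R}^{n+1}$ has a zero, that the zero is unique, and that it depends continuously on $\mu$. First I would set up the relevant vector field. For $\mu$ a finite Borel measure satisfying the sub-half-mass condition \eqref{ineq:halfmass}, define
\begin{equation}\label{defiV}
  V(\xi) = \frac{1}{\mu(\mathbb{S}^n)}\int_{\mathbb{S}^n} \phi_\xi(x)\, d\mu(x), \qquad \xi \in \mathbb{B}^{n+1}.
\end{equation}
Since each $\phi_\xi$ maps $\mathbb{S}^n$ into $\mathbb{S}^n$, $V(\xi)$ lies in the closed unit ball $\overline{\mathbb{B}^{n+1}}$. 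The heart of the matter is to understand the boundary behaviour: I would prove that as $\xi \to y_0 \in \mathbb{S}^n$, the map $\phi_\xi$ converges (pointwise, away from the single point $-y_0$) to the constant map $y_0$, and uniformly on compact subsets of $\mathbb{S}^n \setminus \{-y_0\}$. Consequently, by dominated convergence (the integrands are bounded by $1$), $V(\xi) \to y_0 - \tfrac{\mu(\{-y_0\})}{\mu(\mathbb{S}^n)}\bigl(\text{correction}\bigr)$; more precisely $V(\xi) \to y_0$ \emph{unless} $\mu$ charges the point $-y_0$, and in general the limit of $V(\xi)$ as $\xi \to y_0$ has the form $\bigl(1 - 2\tfrac{\mu(\{-y_0\})}{\mu(\mathbb{S}^n)}\bigr) y_0$ plus lower-order terms — the key point being that its inner product with $y_0$ is $1 - 2\mu(\{-y_0\})/\mu(\mathbb{S}^n)$, which is strictly positive precisely because of hypothesis \eqref{ineq:halfmass}. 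Hence $V$ extends continuously to $\overline{\mathbb{B}^{n+1}}$ and on the boundary sphere it points strictly "outward" in the sense that $\langle V(y_0), y_0\rangle > 0$ for all $y_0 \in \mathbb{S}^n$.

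Given this, existence of a zero of $V$ in the open ball follows from a standard topological argument: the outward-pointing condition on the boundary means the field $-V$ (or the straight-line homotopy $H(\xi,t) = t\,\xi + (1-t) V(\xi)$, or rather a homotopy between $\mathrm{id}$ and $V$ restricted to the boundary) has no zero on $\partial \mathbb{B}^{n+1}$, so $V$ restricted to the boundary is homotopic to the identity as maps $\mathbb{S}^n \to \mathbb{R}^{n+1}\setminus\{0\}$, giving $\deg(V, \mathbb{B}^{n+1}, 0) = 1 \neq 0$; hence $V$ vanishes somewhere in $\mathbb{B}^{n+1}$. (Equivalently one runs the Brouwer fixed-point argument as in Weinberger, or the Poincaré–Miranda / hairy-ball style argument.) For uniqueness, I would invoke monotonicity: one shows that $\xi \mapsto V(\xi)$ has, along every ray from a putative zero, a strictly "expanding" behaviour. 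Concretely, if $\xi_0$ is a zero, differentiate or use the explicit form of $\phi_\xi$ to show $\langle DV(\xi_0)\, w, w\rangle > 0$ for all $w \neq 0$, or more robustly show that $t \mapsto \langle V(\xi_0 + t w), w\rangle$ is strictly increasing on its interval of definition for each fixed unit $w$; this forces at most one zero. This monotonicity is exactly where the conformal structure of the $\phi_\xi$ is used, and I expect it to reduce, after a Möbius normalization sending $\xi_0$ to the origin, to a one-variable calculus statement about the function $r \mapsto \int X_v \, d(\text{pushforward measure})$ under dilation — this is the computation carried out classically by Hersch for $n=2$ and extended by Laugesen.

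For the continuity statement, suppose $\mu_k \to \mu$ weakly, all satisfying \eqref{ineq:halfmass}, and let $\xi_k = \xi(\mu_k)$. Since $\overline{\mathbb{B}^{n+1}}$ is compact, it suffices to show every convergent subsequence $\xi_{k_j} \to \xi_*$ has $\xi_* = \xi(\mu)$. Two cases: if $\xi_* \in \mathbb{B}^{n+1}$ is interior, then $\phi_{\xi_{k_j}} \to \phi_{\xi_*}$ uniformly on $\mathbb{S}^n$, and weak convergence $\mu_{k_j}\to\mu$ together with uniform convergence of continuous integrands gives $\int \phi_{\xi_*} \, d\mu = \lim \int \phi_{\xi_{k_j}}\, d\mu_{k_j} = 0$, so by uniqueness $\xi_* = \xi(\mu)$. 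If instead $\xi_* = y_0 \in \mathbb{S}^n$, I would derive a contradiction with the uniform sub-half-mass bound: one needs that the weak limit $\mu$ also satisfies \eqref{ineq:halfmass} strictly (which holds by hypothesis) — but a subtlety is that weak limits can concentrate mass, so the uniform strictness of the $\mu_k$ alone need not suffice; the cleanest route is to observe that the boundary estimate $\langle V_{\mu_{k_j}}(\xi_{k_j}), y_0\rangle \to 1 - 2\mu(\{-y_0\})/\mu(\mathbb{S}^n)$ along the subsequence would have to equal $0$ (since each $V_{\mu_{k_j}}(\xi_{k_j}) = 0$), forcing $\mu(\{-y_0\}) = \tfrac12\mu(\mathbb{S}^n)$, contradicting \eqref{ineq:halfmass} for $\mu$. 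Making this last step rigorous requires care with how mass of $\mu_k$ near $-y_0$ passes to the limit, and I regard \textbf{this boundary-blowup analysis under weak convergence as the main technical obstacle}; the existence part is comparatively soft, and uniqueness, while requiring a genuine computation, follows a well-trodden path.
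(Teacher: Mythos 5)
The paper itself offers no proof of this lemma: it simply defers to Laugesen~\cite[Corollary~5]{Laugesencenterofmass}, whose argument is based on hyperbolic energy minimization. Laugesen introduces a strictly geodesically convex functional on the Poincar\'e ball (essentially the negative logarithmic/Busemann potential of $\mu$), identifies its hyperbolic gradient with the vector field you call $V$, and obtains existence, uniqueness, and continuous dependence simultaneously from strict convexity. Your route --- degree theory for existence, a monotonicity/Jacobian estimate for uniqueness, and compactness-plus-uniqueness for continuity --- is the more classical decomposition going back to Hersch and Szeg\H{o}. Both are legitimate; the convexity approach is slicker precisely because the three claims drop out together, while your piecemeal argument has to do each by hand.

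Two places in your sketch need tightening. First, $V$ does \emph{not} in general extend continuously to $\overline{\mathbb{B}^{n+1}}$ if $\mu$ has atoms: as $\xi$ approaches $y_0$ along different paths, the fixed antipode $-y_0$ can be sent to different limits, so the radial limit you compute is not the full boundary trace. What does survive, and what the degree argument actually needs, is that $\langle V(\xi),\, \xi/|\xi|\rangle$ is bounded below by a positive quantity uniformly on an annulus near the boundary; that estimate follows from the half-mass hypothesis together with the fact that $\phi_\xi \to y_0$ uniformly on $\mathbb{S}^n\setminus B(-y_0,\delta)$ while $\mu(\overline{B}(-y_0,\delta))$ can be made strictly less than $\tfrac12\mu(\mathbb{S}^n)$ by shrinking $\delta$. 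Second, for uniqueness, positivity of $DV$ at a single zero $\xi_0$ (which you can verify by direct computation after normalizing $\xi_0=0$, giving $\langle DV(0)w,w\rangle = 2|w|^2 - \tfrac{2}{\mu(\mathbb{S}^n)}\int\langle x,w\rangle^2\,d\mu(x) >0$) only yields local uniqueness; you need either a global convexity statement or a M\"obius-normalization argument showing a second zero is impossible, and this is exactly where Laugesen's energy viewpoint pays off. Finally, your continuity step can be salvaged: the subsequential limit $\xi_*\in\mathbb{S}^n$ is ruled out via the Portmanteau theorem, which gives $\limsup_k \mu_k(\overline{B}(-\xi_*,\delta)) \le \mu(\overline{B}(-\xi_*,\delta)) < \tfrac12\mu(\mathbb{S}^n)$ for small $\delta$, and then the same quantitative boundary estimate forces $\langle V_{\mu_{k}}(\xi_{k}),\,\xi_*\rangle$ to stay bounded away from zero for large $k$, a contradiction. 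So your approach works, but you must carry out these three refinements --- none of them is needed in the convexity framework the paper actually cites.
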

The point \( \xi \) is called the \emph{center of mass} of the measure \( d\mu \).
\begin{rem}
    Equation \eqref{mesurenonatomique} is interpreted in the vectorial sense, and it is equivalent to write:
\begin{equation}
    \int_{\mathbb{S}^n} X_v \circ \phi_{\xi} \, d\mu = 0,
\end{equation}
for all \( v \in \mathbb{S}^n \).
\end{rem}
This version of center of mass argument allows us to adapt the topological argument of Lemma \ref{Topological arguments BU domains} to our current setting.
\begin{lemma}
Let \( g \) be a metric that is conformal to the standard one \( g_0 \) on \( \mathbb{S}^n \). Then, there exists a point \( \xi \in \mathbb{B}^{n+1} \) and an orthonormal basis \( (e_i) \) of \( \mathbb{R}^{n+1} \) such that the coordinate functions \( X_{e_i} \circ \phi_{\xi} \) satisfy
\[
\lambda_i(\mathbb{S}^n, g) \int_{\mathbb{S}^n} X_{e_i}^2 \circ \phi_{\xi} \, dv_g 
\leq 
\int_{\mathbb{S}^n} |\nabla_g X_{e_i} \circ \phi_{\xi} |^2 \, dv_g,
\]
for all \( i = 1, \dots, n+1 \).
\end{lemma}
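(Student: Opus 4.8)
The plan is to mimic the Borsuk--Ulam construction of Lemma~\ref{Topological arguments BU domains}, but now using the conformal automorphisms $\phi_\xi$ of $\mathbb{S}^n$ in place of the Weinberger--Bucur--Henrot folding map. The role of the half-mass condition \eqref{ineq:halfmass} is to guarantee that the center of mass is well defined; since $g$ is conformal to $g_0$, the volume measure $dv_g$ is absolutely continuous with respect to $dv_{g_0}$, hence non-atomic, so $dv_g(\{y\}) = 0 < \tfrac12 \mathrm{vol}(\mathbb{S}^n,g)$ for every $y$, and Lemma~\ref{lemmeHersch} applies. However, the point $\xi$ we ultimately want must do more than kill the center of mass: for each chosen direction $e_i$ we need $X_{e_i}\circ\phi_\xi$ to be $L^2(g)$-orthogonal to the first $i$ eigenfunctions $f_0,\dots,f_{i-1}$, so that it is a legitimate test function for $\lambda_i(\mathbb{S}^n,g)$ via \eqref{caractérisation variationnelle Sphère}. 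Since $f_0$ is constant, orthogonality to $f_0$ is exactly $\int_{\mathbb{S}^n} X_{e_i}\circ\phi_\xi\, dv_g = 0$, which the center of mass handles for \emph{all} directions simultaneously; the remaining conditions against $f_1,\dots,f_{i-1}$ will be arranged by the Borsuk--Ulam choice of basis.

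The construction proceeds as follows. First apply Lemma~\ref{lemmeHersch} to $\mu = dv_g$ to obtain the point $\xi \in \mathbb{B}^{n+1}$ with $\int_{\mathbb{S}^n}\phi_\xi\, dv_g = 0$, equivalently $\int_{\mathbb{S}^n} X_v\circ\phi_\xi\, dv_g = 0$ for every $v\in\mathbb{S}^n$; fix this $\xi$ for the rest of the argument. Now select the basis $(e_i)$ by descending induction exactly as in Lemma~\ref{Topological arguments BU domains}. Define $F_{n+1}\colon \mathbb{S}^n \to \mathbb{R}^n$ by
\[
F_{n+1}(p) = \left( \int_{\mathbb{S}^n} (X_p\circ\phi_\xi)\, f_1\, dv_g,\ \dots,\ \int_{\mathbb{S}^n} (X_p\circ\phi_\xi)\, f_n\, dv_g \right);
\]
this is odd (since $X_{-p} = -X_p$ and $\phi_\xi$ is fixed) and continuous in $p$, so by Borsuk--Ulam there is $e_{n+1}\in\mathbb{S}^n$ with $F_{n+1}(e_{n+1}) = 0$. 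Then restrict to the great sphere $\mathbb{S}^{n-1}\subset e_{n+1}^\perp$ and repeat with the map into $\mathbb{R}^{n-1}$ given by the integrals against $f_1,\dots,f_{n-1}$, obtaining $e_n \perp e_{n+1}$ with the corresponding $n-1$ integrals vanishing; continue down to $e_2$, for which only the single integral against $f_1$ must vanish, and finally set $e_1$ to be a unit vector completing $(e_2,\dots,e_{n+1})$ to an orthonormal basis of $\mathbb{R}^{n+1}$ (no orthogonality condition beyond the center-of-mass one, i.e. against $f_0$, is needed for $e_1$). By construction, for each $i\in\{1,\dots,n+1\}$ the function $X_{e_i}\circ\phi_\xi$ is orthogonal in $L^2(\mathbb{S}^n,g)$ to $f_0$ (from the center of mass) and to $f_1,\dots,f_{i-1}$ (from the $(n+2-i)$-th step of the induction, whose list of constraints includes the integrals against $f_1,\dots,f_{i-1}$). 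Hence $X_{e_i}\circ\phi_\xi \in A_i$, and the variational characterization \eqref{caractérisation variationnelle Sphère} gives precisely
\[
\lambda_i(\mathbb{S}^n,g)\int_{\mathbb{S}^n} X_{e_i}^2\circ\phi_\xi\, dv_g \le \int_{\mathbb{S}^n} |\nabla_g (X_{e_i}\circ\phi_\xi)|^2\, dv_g,
\]
provided $X_{e_i}\circ\phi_\xi \not\equiv 0$, which holds because $\phi_\xi$ is a diffeomorphism and $X_{e_i}$ is a nonconstant coordinate function.

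The only genuinely delicate point is bookkeeping the indexing in the Borsuk--Ulam induction so that the orthogonality constraints actually match the subspaces $A_i$: at the step producing $e_i$ one must impose vanishing of the integrals against $f_1,\dots,f_{i-1}$ (that is $i-1$ scalar conditions), which requires a map from $\mathbb{S}^{i-1}$ (the unit sphere inside the span of $e_1,\dots,e_i$, equivalently the orthogonal complement of $e_{i+1},\dots,e_{n+1}$) to $\mathbb{R}^{i-1}$ — and Borsuk--Ulam applies since $i-1 < i$. For $i = n+1$ this is a map $\mathbb{S}^n\to\mathbb{R}^n$, for $i=2$ a map $\mathbb{S}^1\to\mathbb{R}^1$, and for $i = 1$ there are no conditions, so the dimensions are consistent throughout. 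Everything else — non-atomicity of $dv_g$, oddness and continuity of the maps, the fact that $\phi_\xi$ is conformal hence the pullback functions lie in $H^1$ — is routine and parallels the domain case verbatim.
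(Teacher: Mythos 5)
Your proof is correct and follows essentially the same route as the paper's: fix $\xi$ via the Hersch--Szeg\H{o} center of mass for $dv_g$ (which handles orthogonality to the constant $f_0$ for every direction simultaneously), then choose $e_{n+1}, e_n, \dots, e_2$ by a descending Borsuk--Ulam induction on the constraints against $f_1,\dots,f_{i-1}$, complete the basis with $e_1$, and apply the variational characterization \eqref{caractérisation variationnelle Sphère}. You also spell out two small points the paper leaves implicit (non-atomicity of $dv_g$, so Lemma~\ref{lemmeHersch} applies, and $X_{e_i}\circ\phi_\xi \not\equiv 0$), which are correct and harmless additions.
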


\begin{proof}
First, set \( \xi \in \mathbb{B}^{n+1} \) to be the center of mass of the volume measure \( dv_g \). By definition of the center of mass, this implies that for all \( v \in \mathbb{S}^n \),
\begin{equation}
    \int_{\mathbb{S}^n} X_v \circ \phi_{\xi} \, dv_g = 0.
\end{equation}
Next, define the map \( F_{n+1} : \mathbb{S}^n \to \mathbb{R}^n \) as
\[
F_{n+1}(p) =
\begin{bmatrix} 
\int_{\mathbb{S}^n} X_p \circ \phi_{\xi} f_1 \, dv_g \\
\vdots \\
\int_{\mathbb{S}^n} X_p \circ \phi_{\xi} f_n \, dv_g
\end{bmatrix}.
\]
Using the Borsuk-Ulam theorem recursively, as in Lemma \ref{Topological arguments BU domains}, we deduce that for all \( i = 2, \dots, n+1 \), there exists a sequence of vectors \( e_i \in \mathbb{S}^n \), pairwise orthogonal, such that \( F_i(e_i) = 0 \). 
We complete this family of vectors by defining \( e_1 = e_{n+1} \wedge \dots \wedge e_2 \), ensuring it forms an orthonormal basis of \( \mathbb{R}^{n+1} \).

Finally, applying the variational characterization of the eigenvalues \eqref{caractérisation variationnelle Sphère}, we obtain, for all \( i = 1, \dots, n+1 \)
\[
\lambda_i(\mathbb{S}^n, g) \int_{\mathbb{S}^n} X_{e_i}^2 \circ \phi_{\xi} \, dv_g 
\leq 
\int_{\mathbb{S}^n} |\nabla_g X_{e_i} \circ \phi_{\xi} |^2 \, dv_g.
\]
\end{proof}
We are now in a position to prove Theorem \ref{thmprincSphère1}.
\begin{proof}[Proof of Theorem \ref{thmprincSphère1}]
Starting from the inequality established in the previous lemma, we invert and sum over \( i = 1, \dots, n+1 \), which yields
\begin{equation}\label{equationTchebychev}
    \text{vol}(\mathbb{S}^n, g) 
    \leq 
    \sum_{i=1}^{n+1}\frac{1}{\lambda_i(\mathbb{S}^n, g)} 
    \int_{\mathbb{S}^n} |\nabla_g X_{e_i}|^2 \, dv_g.
\end{equation}
Applying Hölder's inequality to the terms in \eqref{equationTchebychev}, we obtain
\begin{align}\label{equationfinaleapresholder}
    \text{vol}(\mathbb{S}^n, g) 
    \leq 
    \sum_{i=1}^{n+1} \frac{1}{\lambda_i(\mathbb{S}^n, g)} 
    \left( \int_{\mathbb{S}^n} |\nabla_g X_{e_i}|^n \, dv_g \right)^{2/n} 
    \text{vol}(\mathbb{S}^n, g)^{1-2/n}.
\end{align}
We observe that the term 
\[
\left( \int_{\mathbb{S}^n} |\nabla_g X_{e_i}|^n \, dv_g \right)^{2/n}
\]
is a conformal invariant on \( \mathbb{S}^n \). Specifically, when \( g \in [g_0] \), this term simplifies to
\[
\left( \int_{\mathbb{S}^n} |\nabla_g X_{e_i}|^n \, dv_g \right)^{2/n} = \frac{n}{n+1} K_n w_n^{2/n},
\]
where \( K_n \) is the constant defined in \eqref{defi de Kn}. The explicit computation of \( K_n \) was performed by Girouard, Nadirashvili, and Polterovich in \cite{GNP}.

Substituting this expression into inequality \eqref{equationfinaleapresholder}, we deduce 
\[
\text{vol}(\mathbb{S}^n, g) 
\leq 
\sum_{i=1}^{n+1} \frac{1}{\lambda_i(\mathbb{S}^n, g)} \frac{n}{n+1} K_n w_n^{2/n} 
\text{vol}(\mathbb{S}^n, g)^{1-2/n}.
\]

Simplifying further and isolating the reciprocal sum of eigenvalues, we find
\begin{equation*}
    \frac{1}{\lambda_1(\mathbb{S}^n, g)} + \cdots + \frac{1}{\lambda_{n+1}(\mathbb{S}^n, g)} 
    \geq 
    \frac{(n+1) \, \text{vol}(\mathbb{S}^n, g)^{2/n}}{n K_n w_n^{2/n}},
\end{equation*}
thereby completing the proof of Theorem \ref{thmprincSphère1}.
\end{proof}

\subsection{Folding onto spherical caps}
The folding method described in the previous section was first introduced by Nadirashvili \cite{nadirashvili2002isoperimetric} for the sphere \(\mathbb{S}^2\) and later generalized by Girouard, Nadirashvili, and Polterovich \cite{GNP} to higher-dimensional spheres. This approach involves folding measures onto spherical caps and leveraging a topological argument to identify a cap that simultaneously satisfies two orthogonality conditions. We briefly outline the construction of this method and apply it, as in Lemma \ref{Topological arguments BU domains}, to prove Theorem \ref{thmprincsphere2}.

\smallskip

Let $(p,t)\in \Sn \times (-1,1)$ and consider
the hemisphere 
$$C_{(p,0)}:=\{x \in \Sn\,:\, \langle x, p\rangle >0 \}$$ 
centered on $p\in\Sn$. The spherical cap $C_{(p,t)}$ centered at $p$ and with radius $t$ is defined as
$$C_{(p,t)} = \phi_{-tp}(C_{(p,0)}).$$
Thus, the space of spherical caps is naturally identified with $(-1,1)\times\Sn$, and its natural compactification is $\overline{\mathbb{B}}^{n+1}$, where the origin $0$ corresponds to the limit as $t\to 1$.

Given \( p \in \mathbb{S}^n \), the reflection \( R_p : \mathbb{S}^n \to \mathbb{S}^n \) across the hyperplane perpendicular to \( p \) and containing the origin is given by
$$ R_p(x) = x - 2 \langle x, p \rangle p. $$

Conjugation with the automorphism defining a spherical cap \( C \) allows the definition of reflections \( \tau_C : \mathbb{S}^n \to \mathbb{S}^n \) across the boundary of any spherical cap \( C = C_{(p, t)} \):
$$ \tau_C := \phi_{-tp} \circ R_p \circ \phi_{tp}. $$

For the spherical cap \( C = C_{(p, t)} \),  define the folding map \( F_C = F_{(p, t)} : \mathbb{S}^n \to C \) by
\begin{equation}\label{foldingmap}
F_C(x) =
\begin{cases}
    x & \text{if } x \in C, \\
    \tau_C(x) & \text{otherwise}.
\end{cases}
\end{equation}
\smallskip
By a standard symmetry argument, originating from Girouard, Nadirashvili, and Polterovich \cite{GNP}, and later refined by Petrides \cite{petrides2014maximization}, Freitas and Laugesen \cite{F-L1}, and Kim \cite{kim2022maximization}, there exists a cap \( C \subset \mathbb{S}^n \) and an associated center of mass \( \xi_C \in \mathbb{B}^{n+1} \) such that, for all \( v \in \mathbb{S}^n \), the following orthogonality conditions hold
\begin{gather}
    \int_{\mathbb{S}^n} X_v \circ \phi_{\xi_C} \circ F_C \, dv_g = 0, \label{eq:1ère_orthogo_Sphère} \\
    \int_{\mathbb{S}^n} X_v \circ \phi_{\xi_C} \circ F_C \cdot f_1 \, dv_g = 0. \label{eq:2eme_orthogo_Sphère}
\end{gather}

\begin{proof}[Proof of Theorem \ref{thmprincsphere2}]
Consider $C,\xi_C$ as in \eqref{eq:1ère_orthogo_Sphère} and \eqref{eq:2eme_orthogo_Sphère}, and then apply recursively the Borsuk-Ulam theorem to the maps
    \( \Tilde{F}_{n+1} : \mathbb{S}^n \to \mathbb{R}^n \) defined as
\[
\Tilde{F}_{n+1}(p) =
\begin{bmatrix} 
\int_{\mathbb{S}^n} X_p \circ \phi_{\xi_C} \circ F_C f_2 \, dv_g \\
\vdots \\
\int_{\mathbb{S}^n} X_p \circ \phi_{\xi_C} \circ F_C f_{n+1} \, dv_g
\end{bmatrix}.
\]
Using the variational characterization \eqref{caractérisation variationnelle Sphère} of $\lambda_{i+1}(\S^n,g)$, with $i=1 \cdots n+1$ , we obtain the following inequalities. 
\[
\lambda_{i+1}(\mathbb{S}^n, g) \int_{\mathbb{S}^n} X_{e_i}^2 \circ \phi_{\xi_C} \circ F_C  \, dv_g \leq \int_{\mathbb{S}^n} |\nabla_g X_{e_i }\circ \phi_{\xi_C} \circ F_C  |^2 \, dv_g,
\]
for all \(i = 1, \dots, n+1\).

 Inverting this inequality and summing, we get
\begin{align}
 \text{vol}(\mathbb{S}^n, g) &\leq \sum_{i=2}^{n+2}\frac{1}{\lambda_i(\mathbb{S}^n, g)} \int_{\mathbb{S}^n} |\nabla_g X_{e_i} \circ \phi_{\xi_C} \circ F_C|^2 \, dv_g \\
&<  \sum_{i=2}^{n+2}\frac{2^{2/n}}{\lambda_i(\mathbb{S}^n, g)}  \left( \int_{\mathbb{S}^n} |\nabla_g X_{e_i}|^n\, dv_g \right)^{2/n} \text{vol}(\mathbb{S}^n, g)^{1-2/n} \\
&= \sum_{i=2}^{n+2}\frac{1}{\lambda_i(\mathbb{S}^n, g)}  \frac{n}{n+1} 2^{2/n} K_n w_n^{2/n} \text{vol}(\mathbb{S}^n, g)^{1-2/n}
\end{align}
Thus, after reorganizing the terms, we deduce
\begin{equation*}
     \frac{1}{\lambda_2(\mathbb{S}^n, g)}+ \cdots + \frac{1}{\lambda_{n+2}(\mathbb{S}^n, g)} > \frac{(n+1)\text{vol}(\S^n,g)}{2^{2/n} n K_n w_n^\frac{2}{n}},
     \end{equation*}
and the proof is finished.

\end{proof}

\subsection{The case of closed orientable surfaces and proof of Theorem \ref{thmprincsurfaces}}

When dealing with a closed, orientable surface \((M, g)\) of genus \(\gamma\), one can approach the problem as if it were a sphere by considering a branched covering
$$ \phi : M \to \mathbb{S}^2 $$
of degree
$$ d \leq \left\lfloor \frac{\gamma+3}{2} \right\rfloor. $$

The existence of such a map follows from the Riemann-Roch theorem, and the bound on the degree is due to El Soufi and Ilias \cite{ElSoufiIlias1984}, who improved an upper bound on the first eigenvalue originally established by Yang and Yau \cite{YangYau}.
\smallskip

Let \(f_i\) be the eigenfunctions associated with the eigenvalues \(\lambda_i(M, g)\). By considering the push forward volume measure of $(M,g)$ onto $\S^2$ by $\phi$, making the same topological arguments as the case of higher-dimensional spheres in \eqref{eq:1ère_orthogo_Sphère} and \eqref{eq:2eme_orthogo_Sphère}, one can always assume that there exists a spherical cap \(C \subset \S^2\) and a center of mass \(\xi_C\) such that for all \(v \in \mathbb{S}^2\):
\begin{gather}
    \int_{M} X_v \circ \phi_{\xi_C} \circ F_C \circ \phi \, dv_g = 0,  \\
    \int_{M} X_v \circ \phi_{\xi_C} \circ F_C \circ \phi \cdot f_1 \, dv_g = 0. 
\end{gather}
where \(\phi_\xi\) be the family of conformal transformations on \(\mathbb{S}^2\) defined in \eqref{defiphi_xi}, where $F_C$ is the folding map defined in \eqref{foldingmap} and where $X_v $ are the coordinate functions on $\S^2$.

\begin{proof}[Proof of Theorem \ref{thmprincsurfaces}]

Through the construction made in the proof of Theorem \ref{thmprincsphere2}, we get the following inequalities

\[
\begin{aligned}
\lambda_4 \int_M X^2_{e_3} \circ \phi_{\xi_C} \circ F_C\circ \phi \, dv_g &\leq \int_M |\nabla X_{e_3} \circ \phi_{\xi_C} \circ F_C \circ \phi|^2 \, dv_g, \\
\lambda_3 \int_M X^2_{e_2} \circ \phi_{\xi_C} \circ F_C \circ \phi \, dv_g &\leq \int_M |\nabla X_{e_2} \circ \phi_{\xi_C} \circ F_C|^2 \, dv_g, \\
\lambda_2 \int_M X^2_{e_1} \circ \phi_{\xi_C} \circ F_C \circ \phi \, dv_g &\leq \int_M |\nabla X_{e_1} \circ \phi_{\xi_C} \circ F_C \circ \phi|^2 \, dv_g.
\end{aligned}
\]
By conformal invariance in dimension $2$, a classical computation shows that for all \(i = 1, \dots, 3\),
\begin{align*}  
 \int_{M} |\nabla X_{e_i} \circ \phi_{\xi_C} \circ F_C \circ \phi|^2 \, dv_g &<  2\int_{\S^2} |\nabla X_{e_i}|^2 \, dv_{g_0} \\
 &=\frac{16\pi d}{3}.
 \end{align*}
Thus, inverting and summing these inequalities, we get
$$ \frac{1}{\lambda_4} + \frac{1}{\lambda_3} + \frac{1}{\lambda_2} > \frac{3\text{Area}(g)}{16 \pi d}.$$
Finally, the result follows from the upper bound for the degree  \(d\) of the branched covering map $\phi$.

\end{proof}

\bibliographystyle{plain}
\bibliography{Bibliographie}
\end{document}